\newtheorem{theorem}{Theorem}[section]
\newtheorem{lemma}[theorem]{Lemma}
\newtheorem{proposition}[theorem]{Proposition}
\newtheorem{corollary}[theorem]{Corollary}
\newtheorem{conjecture}[theorem]{Conjecture}
\theoremstyle{definition}
\newtheorem{example}[theorem]{Example}
\newtheorem{remark}[theorem]{Remark}
\newcommand{\excise}[1]{}
\newcommand{\id}{\operatorname{id}}
\renewcommand{\dim}{\operatorname{dim}}
\newcommand{\crk}{\operatorname{crk}}
\renewcommand{\and}{\qquad\text{and}\qquad}
\newcommand{\Hom}{\operatorname{Hom}}
\newcommand{\CG}{\Comp_{p,i}^{\Gamma\!,\,\Delta}}
\newcommand{\R}{\mathbb{R}}
\newcommand{\C}{\mathbb{C}}
\newcommand{\IH}{I\! H}
\newcommand{\cI}{\mathcal{I}}
\newcommand{\la}{\lambda}
\newcommand{\FS}{\operatorname{FS}}
\newcommand{\FA}{\operatorname{FA}}
\newcommand{\FSop}{\operatorname{FS^{op}}}
\newcommand{\FAop}{\operatorname{FA^{\!op}}}
\newcommand{\Conf}{\operatorname{Conf}}
\newcommand{\Aut}{\operatorname{Aut}}
\newcommand{\Comp}{\operatorname{Comp}}
\begin{document}
\spacing{1.2}
\noindent{\Large\bf Configuration spaces, \boldmath{$\FSop$}-modules, and 
Kazhdan-Lusztig\\ polynomials of braid matroids
}\\

\noindent{\bf Nicholas Proudfoot 
and Benjamin Young}\\
Department of Mathematics, University of Oregon,
Eugene, OR 97403\\

{\small
\begin{quote}
\noindent {\em Abstract.}
The equivariant Kazhdan-Lusztig polynomial of a braid matroid may be interpreted as the intersection
cohomology of a certain partial compactification of the configuration space of $n$ distinct labeled points in $\C$,
regarded as a graded representation of the symmetric group $S_n$.  We show that, in fixed cohomological degree,
this sequence of representations of symmetric groups naturally admits the structure of an $\FS$-module,
and that the dual $\FSop$-module is finitely generated.  Using the work of Sam and Snowden,
we give an asymptotic formula for the dimensions of these representations and obtain restrictions on which
irreducible representations can appear in their decomposition.
\end{quote} }

\section{Introduction}
Given a matroid $M$, the Kazhdan-Lusztig polynomial $P_M(t)$ was defined in \cite{EPW}.
More generally, if $M$ is equipped with an action of a finite group $W$,
one can define the $W$-equivariant Kazhdan-Lusztig polynomial $P_M^W(t)$ \cite{GPY}.
By definition, $P_M^W(t)$ is a graded virtual representation of $W$, and taking dimension recovers
the non-equivariant polynomial.  These representations have been computed when $M$ is a uniform matroid
\cite[Theorem 3.1]{GPY} and conjecturally for certain graphical matroids
\cite[Conjecture 4.1]{thag}.
However, in the case of the braid matroid (the matroid associated with the complete graph on $n$ vertices), 
very little is known.  The non-equivariant version of this problem was taken up in \cite[Section 2.5]{EPW} and the $S_n$-equivariant
version in \cite[Section 4]{GPY}, but with few concrete results or even conjectures.

In this paper we use an interpretation of the equivariant Kazhdan-Lusztig polynomial of the braid matroid $M_n$ as the intersection cohomology of a certain partially compactified configuration space to show that, in fixed cohomological degree, it admits the structure of an
$\FS$-module, as studied in \cite{Pirashvili, church-ellenberg-farb, sam}.  Applying the results of Sam and Snowden \cite{sam},
we use the $\FS$-module structure (or, more precisely, the dual $\FSop$-module structure) to improve our understanding of this sequence
of representations.  In particular, we obtain the following results (Corollary \ref{cor}):
\begin{itemize}
\item For fixed $i$, we prove that the generating function for the $i^\text{th}$ non-equivariant Kazhdan-Lusztig coefficient of $M_n$
(with $n$ varying) is a rational function with poles lying in a prescribed set.
\item For fixed $i$, we derive an asymptotic formula for the $i^\text{th}$ non-equivariant Kazhdan-Lusztig coefficient of $M_n$
in terms of another Kazhdan-Lusztig coefficient that depends only on $i$.
\item We show that, if $\la$ is a partition of $n$ and the associated Specht module $V_\la$ appears as a summand
of the $i^\text{th}$ equivariant Kazhdan-Lusztig coefficient
of $M_n$, then $\la$ has at most $2i$ rows.
\end{itemize} 
We also produce relative versions of these results in which we start with an arbitrary graph $\Gamma$ and consider the sequence
of graphs whose $n^\text{th}$ element is obtained from $\Gamma$ by adding $n$ new vertices and connecting them to everything
(including each other).  The original problem is the special case where $\Gamma$ is the empty graph.

\vspace{\baselineskip}
\noindent
{\em Acknowledgments:}
The authors are grateful to Steven Sam and John Wiltshire-Gordon for extremely helpful discussions
without which this paper would not have been written, and to Tom Braden
for greatly clarifying the material in Section \ref{sec:local}.
The first author is supported by NSF grant DMS-1565036.  

\section{Kazhdan-Lusztig polynomials and configuration spaces}\label{sec:kl}
Let $M$ be a matroid on the ground set $\cI$, equipped with an action of a finite group $W$.  This means that $W$ acts on $\cI$ by permutations
and that the action of $W$ takes bases to bases.  An {\bf equivariant realization} of $W\curvearrowright M$ is $W$-subrepresentation $V\subset \C^\cI$
such that $B\subset \cI$ is a basis for $M$ if and only if $V$ projects isomorphically onto $\C^B$.

Note that we have $\C^\cI\subset \left(\C P^1\right)^\cI$, sitting inside as the locus of points with no coordinate
equal to $\infty$.  More generally, for any subset $S\subset\cI$, let
$p_S\in \left(\C P^1\right)^\cI$ be the point with $(p_S)_i=0$ for all $i\in S$ and $(p_S)_j = \infty$ for all $j\in S^c$,
and let
$$\C^\cI_S := \left\{p\in\left(\C P^1\right)^\cI\;\Big{|}\; \text{$p_i\neq\infty$ for all $i\in S$ and $p_i\neq 0$ for all $i\in S^c$}\right\}$$
be the standard affine neiborhood of $p_S$.  Thus $p_\cI = 0\in V\subset\C^\cI = \C^\cI_\cI$.
Given a $W$-subrepresentation $V\subset\C^\cI$, we define
the following three spaces with $W$-actions:
\begin{itemize}
\item $U(V) := V\cap (\C^\times)^\mathcal{I}$, the {\bf complement} of the coordinate hyperplane arrangement in $V$,
\item $Y(V) := \overline V \subset \left(\C P^1\right)^\cI$, the {\bf Schubert variety} of $V$ (see \cite{ArBoo} or \cite[Section 7]{PXY}),
\item $X(V) := Y(V) \cap \C^\cI_{\emptyset}$, the {\bf reciprocal plane} of $V$.
\end{itemize}
Note that $Y(V)$ is a compactification of $U(V)$, while $V$ and $X(V)$ are each partial compactifications of $U(V)$.

Let $C_{M,i}^W$ denote the coefficient of $t^i$ in the equivariant Kazhdan-Lusztig polynomial $P_M^W(t)$ of $W\curvearrowright M$.
The following theorem appears in \cite[Corollary 2.12]{GPY} as an application of the work in \cite[Section 3]{PWY}.

\begin{theorem}\label{ih}
If $V\subset \C^\cI$ is an equivariant realization of $W\curvearrowright M$, then 
$C_{M,i}^W$ is isomorphic as a representation of $W$ to the intersection cohomology group $I\! H^{2i}\big(X(V); \C\big)$.
In particular, $C_{M,i}^W$ is an honest (not just virtual) representation.
\end{theorem}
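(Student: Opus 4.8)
\begin{sketch}
As recorded above, this is \cite[Corollary 2.12]{GPY}, deduced there from \cite[Section 3]{PWY}; we sketch the argument, which amounts to upgrading the non-equivariant reasoning of \cite{EPW} to a $W$-equivariant one. The plan is to show that the graded $W$-representation $\bigoplus_{i\geq 0}\IH^{2i}\bigl(X(V);\C\bigr)\,t^i$ satisfies the recursion that characterizes $P^W_M(t)$ in \cite{GPY}: namely, $t^{\rk M}\,\overline{P^W_M(t^{-1})}$ (the bar denoting the dual representation) equals the sum, over $W$-orbits of flats $F$ of $M$, of $\Ind^W_{\stab_W(F)}$ applied to the product of the $\stab_W(F)$-equivariant Orlik--Solomon algebra of the restriction $M|_F$ with the $\stab_W(F)$-equivariant Kazhdan--Lusztig polynomial of the contraction $M/F$; the base case is $P^W_M = \triv_W$ when $\rk M = 0$, and otherwise $\deg P^W_M < \tfrac12\rk M$. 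Since $\IH$ is an honest representation in each degree, the final sentence of the theorem follows once this identification is in hand.

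First I would record the geometry. The scaling action of $\C^\times$ on $\C^\cI_\emptyset$ preserves $X(V)$ and contracts it onto the point $p_\emptyset$, so $X(V)$ is an affine cone of dimension $\rk M$, and this action commutes with that of $W$. Moreover $X(V)$ is stratified, $W$-equivariantly, by the flats of $M$: the stratum attached to a flat $F$ is $\stab_W(F)$-equivariantly isomorphic to the complement of the coordinate hyperplane arrangement in a realization of the contraction $M/F$, its closure is the reciprocal plane of $M/F$, and a normal slice to it is the reciprocal plane of the restriction $M|_F$ --- itself an affine cone of dimension $\rk F$, with the base point of the slice for its vertex. The action of $W$ permutes the strata according to its action on the lattice of flats, and $\stab_W(F)$ is the stabilizer of the stratum indexed by $F$.

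Next I would compute. Using the cone structure of the normal slices and induction on $\rk M$, the cone formula identifies the stalk of the intersection cohomology complex of $X(V)$ along the stratum indexed by $F$ with the intersection cohomology of the reciprocal plane of $M|_F$, namely $\bigoplus_j C^{\stab_W(F)}_{M|_F,\,j}\,t^j$. I would then run the hypercohomology spectral sequence of the intersection cohomology complex with respect to this stratification. The essential input is purity --- Orlik--Solomon algebras are pure of Tate type, and the intersection cohomology of the reciprocal planes appearing as normal slices and as stratum closures is pure, these being affine cones over projective varieties --- which, together with the contracting $\C^\times$-action, forces the spectral sequence to degenerate and its filtration to split $W$-equivariantly with no extensions. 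The outcome is that the compactly supported intersection cohomology of $X(V)$ (equivalently, by Poincaré duality, $\IH^*(X(V))$ itself) decomposes as a direct sum over flats whose $F$-term is the compactly supported cohomology of the $F$-stratum tensored with the stalk just described. The compactly supported cohomology of the stratum is, up to the expected degree shift, the Orlik--Solomon algebra of $M/F$; Poincaré duality on the link of the cone $X(V)$ supplies precisely the factor $t^{\rk M}$ and the substitution $t\mapsto t^{-1}$; and gathering the flats of a common $W$-orbit yields $\Ind^W_{\stab_W(F)}$. Reading off the resulting identity gives exactly the equivariant Kazhdan--Lusztig recursion. Finally, since $X(V)$ is a $\rk M$-dimensional affine cone, the cone formula also gives $\IH^{2i}(X(V)) = 0$ for $2i\geq \rk M$ and $\IH^{\mathrm{odd}}(X(V)) = 0$, so the degree bound and base case hold; uniqueness of the recursion then finishes the proof.

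I expect the main obstacle to be the purity-and-degeneration step carried out $W$-equivariantly: one must verify that the stratified hypercohomology spectral sequence of the intersection cohomology complex of $X(V)$ degenerates and that the resulting filtration splits compatibly with the $W$-action and with no residual extension classes. The clean route is to equip everything with weights --- via mixed Hodge structures, or by reducing to a model over a finite field and using Frobenius --- so that every group in sight is pure Tate of the weight prescribed by the $\C^\times$-action; all differentials and all extensions then vanish for weight reasons. Alternatively, one can apply the decomposition theorem to a $W$-equivariant resolution of $X(V)$, such as an augmented wonderful model, at the cost of afterwards identifying the summands combinatorially. The remaining point requiring care is the bookkeeping of the intersection-complex shifts and the Poincaré-duality normalizations, so that the geometric recursion lines up with the combinatorial one with the correct powers of $t$ and the correct half-dimensional truncations.
\end{sketch}
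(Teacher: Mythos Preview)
Your sketch follows the approach of the cited references --- the paper itself does not prove Theorem~\ref{ih} but quotes it from \cite[Corollary~2.12]{GPY}, and the spectral sequence you describe is precisely the paper's Theorem~\ref{ss}. The overall strategy (stratify $X(V)$ by flats, run the stratified hypercohomology spectral sequence for $IC_{X(V)}$, use purity of the Orlik--Solomon and local intersection cohomology pieces to force degeneration and splitting, then match against the defining recursion for $P^W_M$) is correct and is what those references do.

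There is, however, a restriction/contraction swap running through your second and third paragraphs. In the stratification $X(V)=\bigsqcup_F U(V_F)$ of the paper, the stratum indexed by the flat $F$ is the hyperplane complement in a realization of the \emph{restriction} $M|_F$ (here $V_F\subset\C^F$ is the projection of $V$), and the normal slice is $X(V^F)$, the reciprocal plane of the \emph{contraction} $M/F$ (here $V^F=V\cap\C^{F^c}$); in particular the slice has dimension $\crk F$, not $\rk F$. Your first paragraph states the recursion correctly --- Orlik--Solomon of the restriction times Kazhdan--Lusztig of the contraction --- but in your third paragraph you identify the stalk as $\bigoplus_j C^{\stab_W(F)}_{M|_F,j}\,t^j$ and the compactly supported cohomology of the stratum as the Orlik--Solomon algebra of $M/F$, which is the transposed product and would not match the recursion you are aiming for. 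Once you swap these back, the dimensions, degree shifts, and recursion all align and the argument goes through exactly as you outline.
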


Let $\cI_n := \big\{(i,j) \mid i\neq j \in [n]\big\}$, and let $M_n$ be the matroid
on the ground set $\cI_n$ whose bases consist of oriented spanning trees for the complete graph on $n$ vertices.  
We will refer to $M_n$ as the {\bf braid matroid}, which comes equipped with a natural
action of the symmetric group $S_n$.

\begin{remark}
It is more standard to define the braid matroid on the ground set of {\em unordered} pairs of elements of $[n]$.
Our matroid $M_n$ is not simple (for any $i\neq j$, the set $\{(i,j), (j,i)\}$ is dependent), and its simplification is 
$S_n$-equivariantly isomorphic to the usual braid matroid.  In particular, they have 
the same lattice of flats (see Section \ref{sec:local} for the definition of a flat), 
and therefore the same equivariant Kazhdan-Lusztig polynomial.
We prefer the ordered version because it is equivariantly realizable (as we explain below), thus we may apply Theorem \ref{ih}.
\end{remark}

Consider the linear map $f:\C^n\to \C^{\cI_n}$ given by $f_{ij}(z_1,\ldots,z_n) = z_i - z_j$.  
The kernel of $f$ is equal to the diagonal line $\C_\Delta\subset \C^n$,
so $f$ descends to an inclusion of $V_n := \C^n/\C_{\Delta}$ into $\C^{\cI_n}$, which gives an equivariant realization of $\C^n$.
Let $U_n := U(V_n)$, $Y_n := Y(V_n)$, and $X_n := X(V_n)$.
The space $U_n$ may be identified with the configuration space of $n$ distinct labeled points in $\C$, modulo simultaneous translation.
Informally, $V_n$ is obtained from $U_n$ by allowing the distances between points
to go to zero, the reciprocal plane $X_n$ is obtained from $U_n$ by allowing the distances between points to go to infinity,
and the Schubert variety $Y_n$ is obtained from $U_n$ by allowing distances between points to go to either zero or infinity.

\begin{remark}
The reciprocal plane $X_n$ may also be described as the spectrum 
of the subring $\C\left[\frac{1}{x_i-x_j}\mid i\neq j\right]$ of the ring of rational functions
on $\C^n$.  More generally, $X(V)$ is isomorphic to the spectrum of the subring of rational functions on $V$
generated by the reciprocals of the coordinate functions.
This ring is called the {\bf Orlik-Terao algebra} of $V\subset \C^\cI$.
\end{remark}

The non-equivariant Kazhdan-Lusztig polynomial of $M_n$ for $n\leq 20$ appears in \cite[Section A.2]{EPW}.
The first few coefficients of this polynomial can be expressed in terms of Stirling numbers \cite[Corollary 2.24 and Proposition 2.26]{EPW}.
The same can be said of all of the terms, but the expressions become increasingly complicated.
Indeed, the $i^\text{th}$ coefficient can be expressed as an alternating
sum of $i$-fold products of Stirling numbers, where the number of summands is equal to $2\cdot 3^{i-1}$ \cite[Corollary 4.5]{PXY}.
We also made a conjecture about the leading term when $n$ is even \cite[Section A]{EPW}.  The degree of the Kazhdan-Lusztig polynomial
is by definition strictly less than half of the rank of the matroid, so the largest possible degree of $P_{M_{2i}}(t)$ is $i-1$.

\begin{conjecture}\label{top}
For all $i>0$, $C_{M_{2i}, i-1} = (2i-3)!!(2i-1)^{i-2}$,
the number of labeled triangular cacti on $(2i-1)$ nodes \cite[Sequence A034941]{oeis}.
\end{conjecture}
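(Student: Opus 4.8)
Since this is stated as a conjecture, what follows is a strategy rather than a proof. The plan is to combine Theorem~\ref{ih} with the recursion defining the Kazhdan--Lusztig polynomial in \cite{EPW} and to match the resulting generating function against the functional equation satisfied by the exponential generating function of labeled triangular cacti. Concretely, I would first make that recursion explicit in the braid case. The flats of $M_n$ are the set partitions of $[n]$; if a flat $\pi$ has blocks of sizes $\la_1,\dots,\la_k$, then the restriction of $M_n$ to $\pi$ is the matroid direct sum $M_{\la_1}\times\cdots\times M_{\la_k}$ and the contraction $M_n/\pi$ is $M_k$, whose characteristic polynomial is $\prod_{j=1}^{k-1}(t-j)$. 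Substituting this into the recursion and reading off the coefficient of $t^{i-1}$ when $n=2i$ -- the largest degree the bound $\deg P_{M_n}<\tfrac12\rk M_n$ allows -- expresses $a_i:=C_{M_{2i},\,i-1}$ as a $\Z$-linear combination, indexed by partitions of $[2i]$, of products of (signed) Stirling numbers and lower coefficients $C_{M_k,\,j}$. Because the index set consists of set partitions, the identity is packaged by composition of exponential generating functions (species substitution), producing a functional equation for a generating function assembled from the numbers $C_{M_k,\,j}$.

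The target is to show that this functional equation collapses onto the single equation $R=x\exp(R^2/2)$ obeyed by the exponential generating function $R(x)=\sum_{i\ge1}r_{2i-1}\,x^{2i-1}/(2i-1)!$ of rooted labeled triangular cacti; Lagrange inversion applied to $R=x\exp(R^2/2)$ then gives $r_{2i-1}=(2i-1)!\,(2i-1)^{i-2}/\bigl(2^{i-1}(i-1)!\bigr)$, and dividing by the number $2i-1$ of vertices -- the reindexing that accounts for the translation quotient defining $U_n$, equivalently for pinning one of the $2i$ points at the origin -- yields exactly $(2i-3)!!\,(2i-1)^{i-2}$. The hard part, I expect, is precisely that collapse. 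The recursion of \cite{EPW} involves the Kazhdan--Lusztig coefficients of the smaller braid matroids in \emph{every} degree, not just the top one, so there is no formal reason for a closed recursion among the $a_i$ alone to exist; one is confronting the genuine cancellation recorded in \cite[Corollary 4.5]{PXY}, where $C_{M_n,i}$ appears as an alternating sum of $2\cdot3^{i-1}$ products of Stirling numbers. The most plausible route to controlling it is combinatorial: produce a sign-reversing involution on the set-partition index that kills every contribution except those coming from triangular cacti, or, failing a clean involution, identify the assembled generating function with $R$ by a bootstrap that uses uniqueness of the solution subject to the degree bound.

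If the direct attack stalls, two refinements seem worth pursuing. The first is to work $S_{2i}$-equivariantly: using the equivariant recursion of \cite{GPY}, compute $C_{M_{2i},\,i-1}^{S_{2i}}$ as a virtual character, conjecture that it is the genuine permutation representation of $S_{2i}$ on an explicit family of decorated triangular cacti, and verify the guess on characters; taking dimensions then settles the conjecture, and the row bound of Corollary~\ref{cor} together with finite generation of the dual $\FSop$-module impose a priori constraints that any such guess must satisfy. Note that $a_i$ is exactly the first nonzero value -- at $n=2i$ -- of the fixed-degree sequence $n\mapsto C_{M_n,\,i-1}$, so this amounts to describing the generators of the corresponding $\FSop$-module. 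The second refinement is geometric: by Theorem~\ref{ih} it suffices to compute $\dim \IH^{2(i-1)}\bigl(X_{2i};\C\bigr)$, and one could look for an explicit resolution of $X_{2i}$ (or of the matroid Schubert variety $Y_{2i}$, exploiting its affine paving) that is semismall onto its image, so that this group becomes a direct summand of the degree-$2(i-1)$ cohomology of the resolution with a basis of fundamental classes of subvarieties that one then tries to put in bijection with labeled triangular cacti on $2i-1$ nodes.
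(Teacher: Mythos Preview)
The paper does not prove this statement: it is stated as Conjecture~\ref{top} and left open. There is therefore no proof in the paper to compare your proposal against, and your submission is, as you say yourself, a strategy rather than a proof.

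As a strategy it is reasonable in outline: the identification of flats of $M_n$ with set partitions, of restrictions with products of smaller braid matroids, of contractions with $M_k$, and of the characteristic polynomial with $\prod_{j=1}^{k-1}(t-j)$ is correct, and packaging the recursion via species substitution is the natural move. You are also right that the essential obstacle is that the defining recursion couples the top coefficient to \emph{all} lower-degree coefficients of smaller braid matroids, so there is no a~priori closed recursion among the $a_i$; absent a mechanism (an involution, a bijection, or a uniqueness argument) that forces the collapse you describe, nothing here goes beyond what the paper already records. One small point: your heuristic for the factor $2i-1$ (``accounting for the translation quotient defining $U_n$'') does not quite make sense as stated, since passing to $U_{2i}=\C^{2i}/\C_\Delta$ reduces the dimension by one but does not change the number of labeled points; the honest reason to divide by $2i-1$ is simply the passage from rooted to unrooted labeled cacti on $2i-1$ vertices, and any geometric interpretation of that step would itself need justification.
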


The equivariant Kazhdan-Lusztig polynomial of the braid matroid is even more difficult to understand.
The linear term is computed in \cite[Proposition 4.4]{GPY}, and we also compute the remaining coefficients for $n\leq 9$
\cite[Section 4.3]{GPY}.  We also give a functional equation that characterizes the generating function for the Frobenius
characteristics of the equivariant Kazhdan-Lusztig polynomials \cite[Equation (7)]{GPY}, but we do not know how to solve this equation.

\section{The spectral sequence}\label{sec:local}
In this section we explain how to construct a spectral sequence to compute the intersection cohomology of the reciprocal plane,
which we will later use to endow the Kazhdan-Lusztig coefficients of braid matroids with an FS-module structure.
This construction appears for a particular example in \cite[Section 3]{PWY}, and we make some remarks there about
how to generalize the construction to arbitrary $V\subset\C^\cI$.  We will give the construction in full generality here,
taking care to emphasize the functoriality, which will be crucial for our application in Section \ref{sec:kl}. 

A subset $F\subset\cI$ is called a {\bf flat} of $M$ if there exists a point $v\in V$ such that $F = \{i\mid v_i=0\}$.
Given a flat $F$, let $V^F := V\cap \C^{F^c} \subset \C^{F^c}$ and let $V_F \subset \C^F$ be the image of $V$ along the projection
$\C^\cI\twoheadrightarrow\C^F$.  The dimension of $V_F$ is called the {\bf rank} of $F$, while the dimension of $V^F$ is called the {\bf corank}. 

Given a flat $F\subset\cI$, let $Y(V)_F := \big\{p\in Y(V) \mid p_i = \infty \Leftrightarrow i\in F^c\big\}$.
Then we have 
\begin{equation}\label{Y-strat}
Y(V) = \bigsqcup_F Y(V)_F\end{equation} 
and $Y(V)_F \cong V_F$ for all $F$ \cite[Lemmas 7.5 and 7.6]{PXY}.
This affine paving may also be described as the orbits of a group action.
The additive group $\C$ acts on $\C P^1 = \C\cup\{\infty\}$ by translations; taking products, we obtain an action
of $\C^\cI$ on $\left(\C P^1\right)^\cI$.  The subgroup $V\subset\C^\cI$ acts on the subvariety 
$Y(V) := \overline V \subset \left(\C P^1\right)^\cI$, and the subset $Y(V)_F$ is equal to the orbit of the point $p_F\in Y(V)$.
The stabilizer of $p_F$ is equal to $V^F\subset V$, and the orbit is therefore isomorphic to $V/V^F\cong V_F$.

For any flat $F\subset\cI$, there is a canonical inclusion $\epsilon_F: X(V^F)\hookrightarrow Y(V)\cap \C^\cI_F$
defined explicitly by the formula
$$\epsilon_F(p) := \begin{cases}
p_i\;\;\text{if $i\in F^c$}\\
0\;\;\;\text{if $i\in F$}.
\end{cases}$$
In particular, $\epsilon_F(\infty) = p_F$.  Consider the map
$$\begin{aligned}
\varphi_F:V \times X(V^F) &\longrightarrow Y(V)\\
(v, p) &\longmapsto v\cdot \epsilon_F(p).
\end{aligned}
$$
If we choose a section $s:V_F\to V$ of the projection $\pi_F:V\to V_F$,
then the restriction of $\varphi_F$ to $s(v_F)\times X(V^F)$ is an open immersion.
In particular, for every $v\in V$, the map 
$\varphi_{F,v}:X(V^F) \to Y(V)$ taking $p$ to $\varphi_F(v,p)$
is a normal slice to the stratum $V_F\subset Y(V)$ at the point $\varphi_{F,v}(\infty) = \pi_F(v)\in V_F$.

Intersecting the stratification in Equation \eqref{Y-strat} with $\C^\cI_\emptyset$, we obtain a stratification
$$X(V) = \bigsqcup_F U(V_F)$$ of the reciprocal plane $X(V)$, which can be used
to construct a spectral sequence that computes the intersection cohomology of $X(V)$.

\begin{theorem}\label{ss}
Let $W$ be a finite group acting on $\cI$, and let $V\subset \C^\cI$ be a $W$-subrepresentation.
There exists a first quadrant cohomological spectral sequence $E(V,i)$ in the category of $W$-representations 
with
$$E(V,i)_1^{p,q} = \bigoplus_{\crk F = p} H^{2i-p-q}\big(U(V_F); \C\big) \otimes I\! H^{2(i-q)}\big(X(V^F); \C\big),
$$
converging to $\IH^{2i}(X(V); \C)$.
\end{theorem}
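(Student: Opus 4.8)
The plan is to construct the spectral sequence as the one associated to a filtration of the category-theoretic pushforward used to define intersection cohomology, where the filtration comes from the stratification $X(V)=\bigsqcup_F U(V_F)$. Concretely, I would order the flats by corank and build the ascending union of open subsets $X(V)_{\le p} := \bigsqcup_{\crk F \le p} U(V_F)$, which is open in $X(V)$ because the closure of a stratum is a union of strata of smaller corank; this is exactly the data one extracts from the stratification of $Y(V)$ in Equation \eqref{Y-strat} after intersecting with $\C^\cI_\emptyset$. The filtration of $X(V)$ by the $X(V)_{\le p}$ induces, for the intersection cohomology sheaf $\IC_{X(V)}$, an exact-couple/spectral-sequence whose $E_1$ page records the compactly-supported (or ordinary, via the self-duality discussion that must be in Section \ref{sec:local}) intersection cohomology of the successive quotients, i.e.\ the local contributions at each stratum. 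The whole construction is carried out in the equivariant derived category of $W$-representations, so $W$-equivariance is automatic provided every map in sight is $W$-equivariant — and here the key point is that $W$ permutes the flats, hence permutes the strata, hence preserves the filtration, so the spectral sequence lives in $\Rep(W)$ as claimed.

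The heart of the argument is identifying the $E_1$ term. For a single stratum $U(V_F)$ sitting inside $X(V)$, one needs the stalk (or costalk) of $\IC_{X(V)}$ along $U(V_F)$, together with the fact that this stalk is constant along the stratum. This is where the normal-slice analysis of Section \ref{sec:local} does the work: the map $\varphi_{F,v}:X(V^F)\to Y(V)$ is a normal slice to $V_F$ at $\pi_F(v)$, and intersecting with $\C^\cI_\emptyset$ shows that a normal slice to $U(V_F)$ inside $X(V)$ is precisely $X(V^F)$. Therefore the local intersection cohomology transverse to the stratum is $\IH^*(X(V^F);\C)$, and since $\IC$ restricted to a stratum is a (shifted) local system which here is trivial — the stratum $U(V_F)$ is the complement $U(V_F)\subset V_F$ of a hyperplane arrangement, over which the slice family is, up to the $V$-action, a product — the contribution of the stratum $U(V_F)$ to the $E_1$ page is $H^*\big(U(V_F);\C\big)\otimes \IH^*\big(X(V^F);\C\big)$ with appropriate degree shifts. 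Matching the cohomological degrees (the $2i$ on $\IH^{2i}(X(V);\C)$, the codimension shift by $\crk F = p$, and the internal grading) against the stated formula
$$E(V,i)_1^{p,q} = \bigoplus_{\crk F = p} H^{2i-p-q}\big(U(V_F); \C\big) \otimes \IH^{2(i-q)}\big(X(V^F); \C\big)$$
is then a bookkeeping exercise: the factor $2(i-q)$ is the transverse intersection cohomology degree, and $2i-p-q$ is what remains for the longitudinal (stratum) cohomology after accounting for the corank shift $p$. The purity/parity statement of Theorem \ref{ih} (that $\IH^{\mathrm{odd}}$ vanishes) explains why only even degrees appear in the transverse factor and why the spectral sequence is first-quadrant.

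The main obstacle I anticipate is not the existence of the spectral sequence — that is formal from the filtered stratification — but rather the precise identification of the local systems and the verification that the slice decomposition $V\times X(V^F)\to Y(V)$ genuinely trivializes the restriction of $\IC_{X(V)}$ to each stratum, \emph{equivariantly}, including the correct Tate twist/shift conventions. One must check that the open immersion $s(v_F)\times X(V^F)\hookrightarrow Y(V)$ patches over $v_F\in V_F$ to give a genuine product structure on the $\IC$ sheaf near the stratum (so that the $E_1$ term really is a tensor product of a \emph{global} cohomology of $U(V_F)$ with a single copy of $\IH^*(X(V^F))$, rather than cohomology with coefficients in a nontrivial local system), and that all of this is compatible with the $W$-action permuting the flats $F$ within a $W$-orbit — so that the direct sum $\bigoplus_{\crk F = p}$ is the induced representation one expects. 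I would handle this by working on $Y(V)$ first, where the group action $V\curvearrowright Y(V)$ makes the slice structure manifestly a $V$-equivariant product on the orbit $Y(V)_F\cong V/V^F$, and then descending to $X(V)=Y(V)\cap\C^\cI_\emptyset$; the functoriality emphasized in Section \ref{sec:local} is exactly what makes this descent, and the $W$-equivariance, go through without ad hoc choices.
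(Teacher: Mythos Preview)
Your outline correctly sets up the stratification spectral sequence and the identification of the local contribution at each stratum via the normal slice $\varphi_{F,v}$; that part matches the paper. The gap is in what you call ``a bookkeeping exercise.'' The raw stratification spectral sequence you describe has
\[
E(V)_1^{p,q} \;=\; \bigoplus_{\crk F = p}\ \bigoplus_{j+k=p+q} H^{j}\big(U(V_F);\C\big)\otimes \IH^{k}_c\big(X(V^F);\C\big)
\]
and converges to $\IH^{p+q}\big(X(V);\C\big)$ in all degrees simultaneously. The theorem, by contrast, asserts a \emph{family} of spectral sequences $E(V,i)$, one for each $i$, each converging to the single vector space $\IH^{2i}\big(X(V);\C\big)$, and with a \emph{single} tensor factor $H^{2i-p-q}\otimes \IH^{2(i-q)}$ at each $(p,q)$. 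You cannot extract this from $E(V)$ by bookkeeping alone: for a fixed $(p,q)$ the sum over $j+k=p+q$ contains many terms, and there is no formal reason the differentials should respect any splitting of that sum.

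What the paper does---and what your plan omits entirely---is pass to the weight filtration. The point is that $H^{j}\big(U(V_F);\C\big)$ is pure of weight $2j$ (this is the special feature of complements of hyperplane arrangements, due to Shapiro) while $\IH^{k}_c\big(X(V^F);\C\big)$ and $\IH^{k}\big(X(V);\C\big)$ are pure of weight $k$. Taking the weight-$2i$ subquotient of $E(V)$ therefore picks out exactly the pair $(j,k)$ with $2j+k=2i$ and $j+k=p+q$, i.e.\ $j=2i-p-q$ and $k=2(p+q-i)$, and the resulting spectral sequence $E(V,i)$ converges to the weight-$2i$ part of $\IH^*$, namely $\IH^{2i}$. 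Poincar\'e duality on $X(V^F)$ (which has dimension $p$) then converts $\IH^{2(p+q-i)}_c$ to $\IH^{2(i-q)}$. The parity argument you invoke from Theorem~\ref{ih} only explains why odd $k$ don't appear; it does not by itself produce the splitting into the $E(V,i)$, and the degree identities you wrote down (e.g.\ $(2i-p-q)+2(i-q)\neq p+q$) cannot hold in the undecomposed spectral sequence.
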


\begin{proof}
Let $\iota_F:V_F\to Y(V)$ denote the inclusion of the stratum of $Y(V)$ indexed by $F$,
which restricts to the inclusion $\iota_F:U(V_F)\to X(V)$ of the corresponding stratum of $X(V)$.
The stratification of $X(V)$ induces a filtration by supports on the complex of global sections of an injective resolution
of the intersection cohomology sheaf $IC_{X(V)}$.  This filtered complex gives rise to a spectral sequence $E(V)$ with
$$E(V)_1^{p,q} = \bigoplus_{\crk F = p}\mathbb{H}^{p+q}\left(\iota_F^! IC_{X(V)}\right)$$
converging to $I\! H^*(X(V); \C)$ \cite[Section 3.4]{BGS96}.

The sheaf $\iota_F^! IC_{X(V)}$ is {\em a priori} a local system on $U(V^F)$ with fibers equal to the compactly supported
intersection cohomology of the stalks of $IC_{X(V)}$.
However, since $X(V)$ is open in $Y(V)$, the sheaf $\iota_F^! IC_{X(V)}$ on $U(V_F)$ coincides
with the restriction of the sheaf $\iota_F^! IC_{Y(V)}$ on $V_F$.
Since $V_F$ is a vector space, this local system is trivial.  Even better, we have a canonical trivialization.
For any $v_F\in V_F$, we can choose $v\in V$ with $\pi_F(v) = v_F$, and the slice
$\varphi_{F,v}:X(V^F)\to Y(V)$ induces an isomorphism from the fiber of $\iota_F^! IC_{Y(V)}$ to 
the compactly supported intersection cohomology group $I\! H^*_c\big(X(V^F); \C\big)$.
Since the kernel $V^F$ of $\pi_F$ is connected, this isomorphism does not depend on the choice of $v$.
Thus we have a canonical isomorphism
$$E(V)_1^{p,q} = \bigoplus_{\crk F = p}\bigoplus_{j+k=p+q} H^j\big(U(V_F); \C\big) \otimes I\! H^k_c\big(X(V^F); \C\big).$$

We now consider the weight filtration on $E(V)$, and pass to the maximal subquotient $E(V,i)$ of weight $2i$.
The group $H^j\big(U(V_F); \C\big)$ is pure of weight $2j$ \cite{Shapiro};
the groups $I\! H^k_c\big(X(V^F); \C\big)$ and $I\! H^k\big(X(V); \C\big)$
are both pure of weight $k$, and they vanish when $k$ is odd \cite[Proposition 3.9]{EPW}.
This implies that 
$$E(V,i)_1^{p,q} = \bigoplus_{\crk F = p} H^{2j-p-q}\big(U(V_F); \C\big) \otimes I\! H^{2(p+q-i)}_c\big(X(V^F); \C\big),$$
and that $E(V,i)$ converges to $I\! H^{2i}\big(X(V); \C\big)$.  Finally, we observe that $\dim X(V^F) = \crk F = p$, so
Poincar\'e duality tells us that
$I\! H^{2(p+q-i)}_c\big(X(V^F); \C\big) \cong I\! H^{2(i-q)}\big(X(V^F); \C\big)$.
\end{proof}

\begin{remark}
The proof of Theorem \ref{ss} for a particular class of examples appears in \cite[Proposition 3.3]{PWY}.
The argument here is essentially the same.  Indeed, we implicitly used Theorem \ref{ss} in the proof of Theorem \ref{ih},
which originally appeared in \cite[Corollary 2.12]{GPY}.  The only new ingredient here is an emphasis of the fact that the
local system $\iota_F^! IC_{X(V)}$ is {\em canonically} trivialized, which we need in order to make
sense of Theorem \ref{canon}.  We are grateful to Tom Braden for explaining to us how this works.
\end{remark}

Next, we will show that for every flat $F\subset\cI$, we obtain a canonical map from $E(V,i)$
to $E(V^F,i)$, which we will describe explicitly.
The cohomology of $U(V)$ is generated by degree 1 classes $\{\omega_i \mid i\in\cI\}$. 
Explicitly, we have $\omega_i = [d\log z_i]$, where $z_i$ is the coordinate function on $U(V)\subset\C^\cI_{\cI}$.

\begin{theorem}\label{canon}
Suppose that $F\subset\cI$ is a flat.
\begin{enumerate}
\item There is a canonical map of graded vector spaces $I\! H^*\big(X(V); \C\big)\to I\! H^*\big(X(V^F); \C\big)$,
equivariant for the stabilizer $W_F\subset W$ of $F$.
\item There is a canonical map of spectral sequences $E(V,i)\to E(V^F,i)$, equivariant for
the stabilizer $W_F\subset W$ of $F$, converging to the map in part 1.
\item If $G\supset F$, then the compositions $I\! H^*\big(X(V); \C\big)\to I\! H^*\big(X(V^F); \C\big)\to I\! H^*\big(X(V^G); \C\big)$
and $E(V,i)\to E(V^F,i)\to E(V^G,i)$ coincide with the maps
$I\! H^*\big(X(V); \C\big)\to I\! H^*\big(X(V^G); \C\big)$ and $E(V,i)\to E(V^G,i)$, respectively.
\item The map from
$$E(V,i)_1^{p,q} = \bigoplus_{\crk G = p} H^{2i-p-q}\big(U(V_G); \C\big) 
\otimes I\! H^{2(i-q)}\big(X(V^G); \C\big)$$
to
$$E(V^F,i)_1^{p,q} = \bigoplus_{\substack{G\supset F\\\crk G = p}} H^{2i-p-q}\big(U(V^F_G); \C\big) 
\otimes I\! H^{2(i-q)}\big(X(V^G); \C\big)$$
kills summands with $G\not\supset F$.  If $G\supset F$ and $i\in G$, then the map on $G$ summands
is induced by the map $H^{1}\big(U(V_G); \C\big)\to H^{1}\big(U(V^F_G); \C\big)$ obtained by setting $\omega_i$
equal to zero for all $i\in F$.
\end{enumerate}
\end{theorem}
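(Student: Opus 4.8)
The plan is to construct all four maps from a single geometric source: the normal slice inclusions $\varphi_{F,v}$ introduced above. For part (1), since $X(V)$ is open in $Y(V)$ and $X(V^F)$ embeds into $Y(V)$ via the slice $\varphi_{F,v}$ (whose image meets $X(V)$ in an open set, because $\epsilon_F(p)$ has no infinite coordinate when $p\in X(V^F)$), restriction of the intersection cohomology sheaf $IC_{Y(V)}$ along this slice gives a pullback map $\IH^*(X(V);\C)\to \IH^*(X(V^F);\C)$ on intersection cohomology. The key point — already established in the proof of Theorem \ref{ss} — is that the kernel $V^F$ of $\pi_F$ is connected, so the map does not depend on the choice of $v$ with $\pi_F(v)=v_F$; and since $W_F$ permutes the slices $\varphi_{F,v}$ compatibly with its action on both spaces, the map is $W_F$-equivariant. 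First I would make this independence precise by noting that two slices through the same stratum point are related by the $V^F$-action, which acts trivially on $\IH^*$ because $V^F$ is a connected group.

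For part (2), I would lift this to the level of the filtered complexes computing the spectral sequences. The filtration on $E(V,i)$ comes from the stratification $Y(V)=\bigsqcup_G V_G$ via supports on an injective resolution of $IC_{X(V)}$; the filtration on $E(V^F,i)$ comes analogously from the stratification of $Y(V^F)$ by flats $G'$ of $V^F$. The crucial combinatorial input is that flats $G'$ of $V^F$ are exactly the sets of the form $G\smallsetminus F$ for flats $G$ of $V$ with $G\supseteq F$, with $(V^F)_{G'}\cong V_G$ and $(V^F)^{G'}\cong V^G$. So the slice $\varphi_{F,v}$ sends the stratum of $Y(V^F)$ indexed by $G'=G\smallsetminus F$ into the stratum of $Y(V)$ indexed by $G$, and pulling back the injective resolution and the filtration gives a filtered map of complexes, hence a map of spectral sequences converging to the map in part (1). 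I would take care to record that, under the canonical trivializations of the local systems $\iota_G^!\,IC$ fixed in the proof of Theorem \ref{ss}, the induced map is the identity on the $\IH^*_c(X(V^G);\C)$ factors (since the slices $\varphi_{G,w}$ and $\varphi_{F,v}\circ\varphi_{G',w'}$ agree up to the connected group action) — this is what makes the description in part (4) clean.

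Part (3) is a compatibility check: given $F\subseteq G\subseteq\cI$ flats, one has $\epsilon_G = \epsilon_{G\smallsetminus F}\circ\epsilon_F$ at the level of the coordinate-setting formulas, so the slices compose, $\varphi_{F,v}\circ\varphi_{G\smallsetminus F,\,v'}$ agrees with $\varphi_{G,v''}$ up to the $V^G$-action, and since $V^G$ is connected the two composites on $\IH^*$ (and on the spectral sequences) coincide. For part (4), the statement that summands with $G\not\supseteq F$ are killed is immediate from part (2), since the pullback along $\varphi_{F,v}$ only sees strata $V_G$ with $G\supseteq F$. For the $G\supseteq F$ summands, by part (2) the map is the identity on the $\IH^{2(i-q)}(X(V^G))$ factor, so I only need to identify the map on $H^*(U(V_G))$ factors; on degree-$1$ classes, $\omega_i=[d\log z_i]$ pulls back under the projection $U(V_G)\to U(V^F_G)$ (which corresponds to forgetting the coordinates indexed by $F$, since those are set to $0$ by $\epsilon_F$) to $\omega_i$ if $i\notin F$ and to $0$ if $i\in F$ — the latter because $d\log z_i$ becomes $d\log 0$, i.e. the form is not pulled back, matching "setting $\omega_i$ equal to zero for all $i\in F$"; since $H^*(U(V_G))$ is generated in degree $1$ by the $\omega_i$, this determines the map.

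The main obstacle I anticipate is part (2): one must genuinely work at the level of a map of filtered complexes rather than just spectral sequence pages, and verify that the chosen injective resolutions and the canonical trivializations of the normal-slice local systems are compatible with pullback along $\varphi_{F,v}$. Concretely, the delicate point is checking that the trivialization of $\iota_G^!\,IC_{Y(V)}$ via $\varphi_{G,w}$ and the trivialization of $\iota_{G'}^!\,IC_{Y(V^F)}$ via $\varphi_{G',w'}$ are intertwined by the slice map, which reduces — using connectedness of $V^G$ — to the associativity $\varphi_F\circ\varphi_{G\smallsetminus F} \sim \varphi_G$ of the slice constructions; once that is pinned down, everything else is formal or a direct computation with logarithmic forms.
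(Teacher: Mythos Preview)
Your overall strategy matches the paper's --- everything is built from the slice maps $\varphi_{F,v}$, and parts (2) and (3) follow from their compatibility with the stratifications and with composition --- but there are genuine gaps in parts (1) and (4). In part (1), the assertion that ``$\epsilon_F(p)$ has no infinite coordinate when $p\in X(V^F)$'' is false: points of $X(V^F)=Y(V^F)\cap\C^{F^c}_\emptyset$ may well have infinite coordinates (they are only forbidden from having zero ones), and in any case $\epsilon_F(p)_i=0$ for every $i\in F$, so $\epsilon_F(X(V^F))$ never lands in $X(V)=Y(V)\cap\C^\cI_\emptyset$ at all. The paper instead passes through the stalk of $IC_{X(V)}$ at a point $v_F\in U(V_F)$, identifies it (by openness of $X(V)\subset Y(V)$) with the stalk of $IC_{Y(V)}$ there, uses the slice $\varphi_{F,v}$ to identify that with the stalk of $IC_{X(V^F)}$ at $\infty$, and then invokes the contracting $\C^\times$-action on $X(V^F)$ to identify the latter with $\IH^*(X(V^F);\C)$.

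In part (4), your description of the induced map on strata as ``the projection $U(V_G)\to U(V^F_G)$ forgetting the coordinates indexed by $F$'' is not the relevant map (and goes the wrong direction for the pullback $H^*(U(V_G))\to H^*(U(V^F_G))$ you need), and ``$d\log z_i$ becomes $d\log 0$'' is not a meaningful computation. The map on the $E_1$ page comes from $\varphi_{F,v}$, which on the $G$-stratum sends $p$ to the point with $i$-th coordinate $p_i+v_i$ for $i\in G\smallsetminus F$ and $v_i$ for $i\in F$; this is a translation, not a coordinate-forgetting projection, and it lands in $U(V_G)$ only on the open subset $B\subset U(V^F_G)$ where each $|p_i|>|v_i|$. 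For $i\in F$ the pullback of $z_i$ is the nonzero constant $v_i$, whence $\omega_i\mapsto 0$; but for $i\in G\smallsetminus F$ the pullback of $z_i$ is $z_i+v_i$, and you still owe the verification that $[d\log(z_i+v_i)]=[d\log z_i]$ on $B$. The paper handles this by writing the difference as $d\log(1+v_i/z_i)$ and observing that on $B$ the real part of $1+v_i/z_i$ is positive, so this form is exact. This homotopy argument is precisely the content your proposal is missing.
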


\begin{proof}
For any point $v_F\in U(V_F)\subset V_F$, we have a map
$$I\! H^*\big(X(V); \C\big) \to H^*\big(IC_{X(V),v_F}\big) \cong H^*\big(IC_{Y(V),v_F}\big) 
\cong H^*\big(IC_{X(V^F),\infty}\big)
\cong I\! H^*\big(X(V^F); \C\big),$$
where the second isomorphism is induced by the slice 
$\varphi_{F,v}:X(V^F)\to Y(V)$ for any $v\in V$ such that $\pi_F(v) = v_F$
and the third isomorphism is induced by the contracting action of $\C^\times$ on $X(V^F)$ \cite[Corollary 1]{Springer-purity}.
As before, the fact that this map is independent of the
choice of $v$ follows from the fact that the kernel $V^F$ of $\pi_F$ is connected.  Since the codimension $p$ strata
of $X(V^F)$ coincide with the preimages of the codimension $p$ strata of $Y(V)$, 
the filtrations of $IC_{Y(V),v_F}
\cong IC_{X(V^F),\infty}$ induced by the two stratifications coincide, thus
this map induces a map of spectral sequences
associated with the stratifications.  This proves the first two parts of the theorem.

To prove the third part of the theorem, choose generic elements $v,v'\in V$ and $v''\in V^F$ such that $v = v' + v''$.
We then have maps $$\varphi_{G,v}:X(V^G)\to Y(V),\qquad \varphi_{F,v'}:X(V^F)\to Y(V),\and
\varphi_{G,v''}^F:X(V^G)\to Y(V^F).$$
If $p\in X(V^G)$ is sufficiently close to the point $\infty$ (more precisely, if $|p_i| > |v''_i|$ for all $i\in G^c$),
then $\varphi_{G,v''}^F(p)\in X(V^F)$.  Thus the composition
$\varphi_{F,v'}\circ\varphi_{G,v''}^F$ is well defined in a neighborhood of $\infty\in X(V^G)$, and on that neighborhood
we have $$\varphi_{G,v} = \varphi_{F,v'}\circ\varphi_{G,v''}^F.$$
Since the maps in parts 1 and 2 are determined by the behavior of the slice map in a neighborhood of $\infty$,
this implies that the maps compose as desired.

To prove the last part of the theorem, we need to understand explicitly the map from the $G$ stratum of $X(V^F)$
to the $G$ stratum of $Y(V)$.  Specifically, if $p\in U(V^F_G)$, and $i\in G$,
then $$\varphi_{F,v}(p)_i = \begin{cases}
p_i + v_i\;\;\;\text{if $i\in F^c$}\\
v_i\;\;\;\;\;\;\;\;\;\;\,\text{if $i\in F$.}
\end{cases}
$$
As in the previous paragraph, if we 
restrict to the open set $B\subset U(V^F_G)$ on which each $p_i$ has norm larger than $|v_i|$, then our map
will take values in $U(V_G)$.  Note that $B$ is homotopy equivalent to $U(V^F_G)$, and the map in the spectral
sequence is determined by the pullback map from $H^*\big(U(V_G); \C)$ to $H^*(B; \C) \cong H^*\big(U(V^F_G);\C \big)$.

Let $z_i$ be the $i^\text{th}$ coordinate function on $U(V_G)$, so that $\omega_i = [d\log z_i]$.
If $i\in F$, then $z_i$ pulls back to a constant function, so $\omega_i$ pulls back to zero.  If $i\in G\smallsetminus F$,
then $z_i$ pulls back to $z_i - v_i$, so $\omega_i$ pulls back to
$$[d\log(z_i - v_i)] = [d\log(z_i \cdot (1-v_i/z_i))] = [d\log z_i] + [d\log(1-v_i/z_i)] = \omega_i + [d\log(1-v_i/z_i)].$$
Since the norm of $z_i$ is always greater than the norm of $v_i$ on $B$, the real part of $1-v_i/z_i$ is always positive,
which implies that $d\log(1-v_i/z_i)$ is exact.  Thus $\omega_i$ pulls back to $\omega_i$, as desired.
\end{proof}

We now unpack Theorem \ref{ss} in the special case where $\cI = \cI_n$ and $V = V_n$.
In this case, flats are in bijection with set-theoretic partitions of $[n]$.  More precisely, given a partition of $[n]$,
the set of all ordered pairs $(i,j)$ such that $i$ and $j$ lie in the same block of the partition is a flat, and every flat arises in this way.
A flat of corank $p$ corresponds to a partition into $p+1$ (unlabeled) blocks $P_1,\ldots,P_{p+1}$.  Given such a flat $F$, we have
$U((V_n)_F) \cong U_{|P_1|}\times\cdots\times U_{|P_{p+1}|}$ and $X(V_n^F) \cong X_{p+1}$.  
In order to clarify the issue of labeled versus unlabeled partitions, we make the following definitions:
$$A_i^{p,q}(n) := \bigoplus_{f:[n]\twoheadrightarrow[p+1]} H_{2i-p-q}\Big(U_{|f^{-1}(1)|}\times\cdots\times U_{|f^{-1}(p+1)|}; \C\Big) \otimes I\! H_{2(i-q)}(X_{p+1}; \C)$$
and $$B_i^{p,q}(n) := A_i^{p,q}(n)^{S_{p+1}},$$
where $S_{p+1}$ acts on $[p+1]$.
Thus we have the following corollary of Theorem \ref{ss}.

\begin{corollary}\label{ss-braid}
There exists a first quadrant cohomological spectral sequence $E(n,i)$ in the category of $S_n$-representations with
$E(n,i)_1^{p,q} = B_i^{p,q}(n)^*$
converging to
$\IH^{2i}(X_n).$
\end{corollary}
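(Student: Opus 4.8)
The plan is to obtain Corollary \ref{ss-braid} as the specialization of Theorem \ref{ss} to the case $\cI = \cI_n$, $V = V_n$, $W = S_n$, taking the linear dual of the spectral sequence there. First I would recall that by Theorem \ref{ih} and the discussion preceding Corollary \ref{ss-braid}, intersection cohomology and ordinary cohomology of these spaces are pure of even degree, so there is no ambiguity in passing between cohomological and homological indexing; the dual of $\IH^{2i}(X(V);\C)$ is a homology group, and similarly for the $E_1$ page. Thus applying $\Hom_\C(-,\C)$ to the spectral sequence $E(V_n,i)$ of Theorem \ref{ss} produces a first quadrant cohomological spectral sequence (the duality reverses differentials but preserves the first-quadrant shape after the standard reindexing) of $S_n$-representations converging to $\IH^{2i}(X_n;\C)^* = \IH_{2i}(X_n;\C)$, with $E_1$ page the dual of $\bigoplus_{\crk F = p} H^{2i-p-q}(U((V_n)_F);\C)\otimes \IH^{2(i-q)}(X(V_n^F);\C)$.

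Next I would identify this dualized $E_1$ term with $B_i^{p,q}(n)^*$. The key combinatorial input, already stated in the paragraph before the corollary, is that flats $F\subset\cI_n$ of corank $p$ are in bijection with partitions of $[n]$ into $p+1$ unlabeled blocks, and that for such a flat one has $U((V_n)_F)\cong U_{|P_1|}\times\cdots\times U_{|P_{p+1}|}$ and $X(V_n^F)\cong X_{p+1}$. The only subtlety is bookkeeping between labeled surjections $f:[n]\twoheadrightarrow[p+1]$ and unlabeled partitions: summing over flats of corank $p$ is the same as summing over labeled surjections and then taking $S_{p+1}$-coinvariants (equivalently invariants, since we are in characteristic zero) for the action permuting the blocks. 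This is exactly how $A_i^{p,q}(n)$ and $B_i^{p,q}(n) = A_i^{p,q}(n)^{S_{p+1}}$ are defined, using homology groups. So $\bigoplus_{\crk F=p} H^{\bullet}(U((V_n)_F);\C)\otimes \IH^{\bullet}(X(V_n^F);\C) = A_i^{p,q}(n)^{*,S_{p+1}} = (A_i^{p,q}(n)^{S_{p+1}})^* = B_i^{p,q}(n)^*$ once the degrees are matched: $H^{2i-p-q}$ dualizes to $H_{2i-p-q}$ and $\IH^{2(i-q)}$ dualizes to $\IH_{2(i-q)}$, which are precisely the indices appearing in the definition of $A_i^{p,q}(n)$. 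The $S_n$-equivariance is automatic since all the identifications above (the stratification of $Y(V_n)$, the slice maps, the product decomposition of strata) are natural and $S_n$ permutes the flats compatibly.

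The reason for taking the dual, rather than stating the corollary directly with $\IH^{2i}$ and $B_i^{p,q}(n)$, is the eventual $\FS$-module application: as explained in the introduction, it is the dual $\FSop$-module that is finitely generated, so it is convenient to phrase the homological statement now. I would make explicit in the write-up that nothing is lost: since everything is finite-dimensional and pure of even weight, $E(n,i)$ is literally $E(V_n,i)^*$ with reversed differentials, and its convergence to $\IH^{2i}(X_n)$ (as written, identifying $\IH_{2i}$ with $\IH^{2i}$ via the nondegenerate pairing, or more precisely with its dual) follows formally from convergence of $E(V_n,i)$.

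I do not expect a serious obstacle here: this is essentially an unpacking of Theorem \ref{ss} combined with a dualization and the labeled-versus-unlabeled translation. The one point requiring a little care is the indexing under duality — making sure that the cohomological degrees $2i-p-q$ and $2(i-q)$ in $E(V_n,i)_1^{p,q}$ match the homological degrees in $A_i^{p,q}(n)$, and that the first-quadrant cohomological structure survives the dualization (it does, because $H_*(-) = H^*(-)^*$ in each bidegree $(p,q)$ with the same $(p,q)$, so no reindexing of the page coordinates is needed, only a reversal of the direction of $d_r$, which is harmless for a first-quadrant spectral sequence). Once this is checked the corollary follows immediately.
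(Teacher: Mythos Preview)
Your second paragraph contains the correct identification and is essentially the entire argument: the $E_1$ page of Theorem \ref{ss} for $V=V_n$, after the flat/partition bookkeeping you describe, is exactly $B_i^{p,q}(n)^*$. But this means the spectral sequence $E(n,i)$ in the corollary \emph{is} $E(V_n,i)$ --- no dualization of the spectral sequence is required. The star on $B_i^{p,q}(n)^*$ is there only because $A$ and $B$ were deliberately defined with homology instead of cohomology (this is exactly what the Remark after the corollary says); taking the dual of $B$ returns cohomology and matches Theorem \ref{ss} on the nose.

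Your first paragraph therefore points the wrong direction and is internally inconsistent with the second. If you actually apply $\Hom_\C(-,\C)$ to $E(V_n,i)$, the resulting spectral sequence has $E_1$ page $B_i^{p,q}(n)$ (homology, not its dual) and converges to $\IH_{2i}(X_n)$, which is not what the corollary asserts. The displayed equation you write in paragraph two identifies the \emph{un}-dualized $E_1$ page with $B_i^{p,q}(n)^*$, contradicting your stated plan to dualize. Drop the dualization step and the discussion of reversed differentials entirely; what remains --- apply Theorem \ref{ss} to $V_n$, then translate flats of corank $p$ into $S_{p+1}$-orbits of surjections $[n]\twoheadrightarrow[p+1]$ --- is correct and coincides with the paper's intended one-line derivation.
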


\begin{remark}
The reason for using homology rather than cohomology in the definition of $A_i^{p,q}(n)$ (and then undoing this by dualizing in Corollary \ref{ss-braid})
will become clear in Section \ref{sec:kl}.  Briefly, the explanation is that intersection cohomology admits the structure of an FS-module and intersection
homology admits the structure of an $\FSop$-module, and it is the $\FSop$-module structure that will prove to be more useful.
\end{remark}

\section{FS-modules and {\boldmath{$\FSop$}}-modules}
Let FS be the category whose objects are nonempty finite sets and whose morphisms are surjective maps.
An FS-module is a covariant functor from FS to the category of complex vector spaces, and an $\FSop$-module
is a contravariant functor from FS to the category of complex vector spaces.  If $N$ is an FS-module
or an $\FSop$-module, we write $N(n) := N([n])$, which we regard as a representation of the symmetric group
$S_n = \Aut_{\FS}([n])$.  Let FA be the category whose objects are nonempty finite sets and whose morphisms are all maps.

For any positive integer $m$, let $P_m := \C\{\Hom_{\FS}(-,[m])\}$ be the $\FSop$-module that takes a finite set $E$
to the vector space with basis given by surjections from $E$ to $[m]$; this is a projective $\FSop$-module called the {\bf principal
projective} at $m$.  We say that an $\FSop$-module $N$ is {\bf finitely generated} if it is isomorphic to the quotient
of a finite sum of principal projectives, and we say that it is {\bf finitely generated in degrees \boldmath$\leq d$} if one only needs
to use $P_m$ for $m\leq d$.  This is equivalent to the statement that, for any finite set $E$ and any vector $v\in N(E)$, we can write
$v$ as a finite linear combination of elements of the form $f^*(x)$, where $f:E\twoheadrightarrow [m]$ and $x\in N(m)$ for some $m\leq d$.

We call an $\FSop$-module {\bf {\boldmath$d$}-small} if it is a subquotient of a module
that is finitely generated in degrees $\leq d$.  A $d$-small $\FSop$-module is always finitely generated 
\cite[Corollary 8.1.3]{sam}, but not necessarily in degrees $\leq d$.  

For any partition $\la = (\la_1,\ldots,\la_{\ell(\la)})\vdash n$, let $V_\la$ be the corresponding irreducible representation of $S_n$.
If $\la$ is a partition of $k$ and $n\geq k + \la_1$, let $\la(n)$ be the partition of $n$ obtained by adding a part of size $n-k$.
For any $\FSop$-module $N$, consider the ordinary generating function
$$H_N(u) := \sum_{n=1}^\infty u^n \dim N(n),$$
and the exponential generating function
$$G_N(u) := \sum_{n=1}^\infty \frac{u^n}{n!} \dim N(n).$$
For any natural number $d$, let $$r_d(N) := \lim_{n\to\infty} \frac{\dim N(n)}{d^n},$$
which may or may not exist.
The statements and proofs of the following results were communicated to us by Steven Sam.

\begin{theorem}\label{dsmall}
Let $N$ be a $d$-small $\FSop$-module.
\begin{enumerate}
\item If $\la\vdash n$ and $\Hom_{S_n}(V_\la, N(n)) \neq 0$, then $\ell(\la)\leq d$.  
\item For any partition $\la$ with $n \geq |\la| + \la_1 $, 
$\dim\Hom_{S_n}\!\big(V_{\la(n)}, N(n)\big)$ is bounded by a polynomial in $n$ of degree at most $d-1$.
\item The ordinary generating function $H_N(u)$
is a rational function whose poles are contained in the set $\{1/j \mid 1\leq j \leq d\}$.
\item There exists polynomials $p_0(u),\ldots,p_{d}(u)$ such that the exponential generating function
$G_N(u)$ is equal to $$\sum_{j=0}^d p_j(u)e^{ju}.$$
\item The function $H_N(u)$ has at worst a simple pole at $1/d$.  Equivalently, the limit $r_d(N)$
exists, and the polynomial $p_d(u)$
in statement 4 is the constant function with value $r_d(N)$.
\end{enumerate}
\end{theorem}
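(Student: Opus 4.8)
\noindent The plan is to reduce every statement, via the definition of $d$-smallness, to explicit facts about the principal projectives $P_m$ with $m\le d$, invoking the Sam--Snowden structure theory only for the one point that genuinely needs it. Note first that if $N$ is $d$-small then, since a subquotient of a quotient is again a subquotient, $N$ is a subquotient of a finite direct sum $\bigoplus_{i=1}^{k}P_{m_i}$ with all $m_i\le d$. So I would begin by recording the relevant invariants of $P_m$. Since $P_m(n)=\C\{\text{surjections }[n]\twoheadrightarrow[m]\}$, inclusion--exclusion gives $\dim P_m(n)=\sum_{j=0}^m(-1)^{m-j}\binom mj\,j^n$; hence $H_{P_m}(u)=\sum_{j=1}^m(-1)^{m-j}\binom mj\,\frac{ju}{1-ju}$ is rational with simple poles in $\{1/j\mid 1\le j\le m\}$, the exponential version is $G_{P_m}(u)=(e^u-1)^m=\sum_{j=0}^m c_j e^{ju}$ with the $c_j$ constant, and $\dim P_m(n)=m^n+O\big((m-1)^n\big)$, so $r_m(P_m)=1$. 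As an $S_n$-representation $P_m(n)$ decomposes into $S_n$-orbits of surjections, indexed by their ordered fibre-size vectors, giving $P_m(n)\cong\bigoplus_\mu\Ind_{S_\mu}^{S_n}\triv$ over compositions $\mu$ of $n$ into exactly $m$ positive parts; Young's rule then identifies the multiplicity of $V_\lambda$ with a sum of Kostka numbers $K_{\lambda,\mu}$.

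Statements 1 and 2 then follow by passing to the subquotient. A constituent $V_\lambda$ of $N(n)$ is a constituent of some $P_{m_i}(n)$, hence $K_{\lambda,\mu}\neq 0$ for a composition $\mu$ with $m_i$ parts; since $K_{\lambda,\mu}\neq 0$ forces $\lambda\trianglerighteq\widetilde\mu$, and dominance forces $\ell(\lambda)\le\ell(\mu)=m_i\le d$, this gives statement 1. For statement 2, the multiplicity of $V_{\lambda(n)}$ in $N(n)$ is at most $\sum_i\dim\Hom_{S_n}\!\big(V_{\lambda(n)},P_{m_i}(n)\big)$, and the multiplicity of $V_{\lambda(n)}$ in $P_m(n)$ is the number of semistandard tableaux of shape $\lambda(n)$ with entries in $[m]$, each value occurring at least once; because only the first part of $\lambda(n)$ grows (which is exactly what the range $n\ge|\lambda|+\lambda_1$ guarantees), the Weyl dimension formula, after an inclusion--exclusion to impose ``every value used,'' evaluates this as a polynomial in $n$ of degree at most $m-1$. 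Summing over $i$ bounds the multiplicity in $N(n)$ by a polynomial of degree at most $d-1$.

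For statements 3--5 I would use two inputs: that $N$ is finitely generated (already cited from \cite[Corollary 8.1.3]{sam}), and the fact, from Sam and Snowden's theory of (quasi-)Gr\"obner categories, that a finitely generated $\FSop$-module has rational Hilbert series whose denominator may be taken to be a product of factors $1-ju$ with $j$ bounded by a generation degree $D$ of $N$. This already gives that $H_N(u)$ is rational with poles in $\{1/j\mid j\le D\}$, and an elementary transfer --- $\sum_n\frac{q(n)\,j^n u^n}{n!}=e^{ju}\cdot(\text{polynomial of degree }\deg q)$ for any polynomial $q$ --- converts this into $G_N(u)=\sum_{j=0}^D p_j(u)e^{ju}$, i.e.\ weak forms of 3 and 4 with $D$ in place of $d$. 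To cut $D$ down to $d$ and to get statement 5, I would invoke the crude bound $\dim N(n)\le\sum_i\dim P_{m_i}(n)=O(d^n)$ from the first paragraph: a rational function with coefficients that are $O(d^n)$ has no pole at $1/j$ for $j>d$ and at most a simple pole at $1/d$, so the poles of $H_N$ lie in $\{1/j\mid j\le d\}$ with the one at $1/d$ simple; extracting asymptotics then gives $\dim N(n)=r_d(N)\,d^n+o(d^n)$, so $r_d(N)$ exists, and matching this against $G_N(u)=\sum_{j\le d}p_j(u)e^{ju}$ forces $p_j=0$ for $j>d$ and $p_d$ to be the constant function $r_d(N)$.

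The only non-elementary ingredient, and therefore the main obstacle, is the rationality statement for finitely generated $\FSop$-modules with the denominator supported on $\{1-ju\}$; everything else is bookkeeping with surjections, Kostka numbers, and partial fractions. In the write-up I would either cite this from \cite{sam} directly or, for a self-contained treatment, prove it by induction on $d$ via the support filtration: the quotient of the category of $d$-small $\FSop$-modules by the $(d-1)$-small ones ought to be semisimple with simple objects assembled from $P_d$ and the irreducible representations of $S_d$, whose Hilbert series are the explicit rational functions computed above, after which the induction together with the $O(d^n)$ bound delivers statements 3--5.
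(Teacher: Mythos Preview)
Your proposal is correct and follows essentially the same architecture as the paper: reduce parts 1 and 2 to the principal projectives and control multiplicities via the combinatorics of semistandard tableaux, then for parts 3--5 cite Sam--Snowden for rationality of the Hilbert series of a finitely generated $\FSop$-module and use the crude bound $\dim N(n)=O(d^n)$ to eliminate poles at $1/j$ with $j>d$ and force the pole at $1/d$ to be simple. The only cosmetic difference is that for parts 1 and 2 the paper embeds $P_m$ in $Q_m(n)=(\C^m)^{\otimes n}$ and invokes Schur--Weyl duality directly, whereas you decompose $P_m(n)$ as a sum of permutation modules and apply Young's rule; these are two phrasings of the same tableau count, and the paper's route saves you the inclusion--exclusion step since the ``each value used'' condition is absorbed by passing to the larger module $Q_m$.
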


\begin{proof}
To prove statements 1 and 2, it is sufficient to prove them for the principal projective $P_m$ for all $m\leq d$.
Let $Q_m(-) := \C\{\Hom_{\FA}(-, [m])\}$, so that $P_m$ is a submodule of $Q_m$.  Then $Q_m(n) \cong (\C^m)^{\otimes n}$,
and Schur-Weyl duality tells us that the multiplicity of $V_\la$ in this representation is equal to the dimension of
the representation of $\operatorname{GL}(m; \C)$
indexed by $\la$.  In particular, it is zero unless $\la$ has at most $m$ parts, and the dimension of the representation
indexed by $\la(n)$ is bounded by a polynomial in $n$ of degree at most $m-1$.  Statements 1 and 2 follow for
$Q_m$, and therefore for $P_m$.

If $N'$ is finitely generated in degrees $\leq d$, then statement 3 holds for $N'$ by \cite[Corollary 8.1.4]{sam}.
If $N$ is a subquotient of $N'$, then it is still finitely generated in degrees $\leq r$ for some $r$, so statement 3
holds for $N$ with $d$ replaced by $r$.  But, since $N$ is a subquotient of $N'$, we have $\dim N(n) \leq \dim N'(n)$
for all $n$, which implies that $e_j=0$ for all $j\leq r$.
Statement 4 follows from statement 3 by finding a partial fractions decomposition of the ordinary generating function,
as observed in \cite[Remark 8.1.5]{sam}.  

To prove statement 5, it is again sufficient to consider $P_m$ for all $m\leq d$.
We have $$\dim P_m(n) = |\Hom_{\FS}([n],[m])| \leq |\Hom_{\FA}([n],[m])| = m^n \leq d^n.$$
Since $N$ is a subquotient of a finite direct sum of modules of this form, the dimension of $N(n)$
is bounded by a constant times $d^n$.
\end{proof}

We now record a pair of lemmas that say that certain natural constructions preserve smallness.

\begin{lemma}\label{general}
Fix a natural number $k$, a $k$-tuple of natural numbers $(d_1,\ldots,d_k)$, and a collection of $\FSop$-modules 
$N_1,\ldots,N_k$ such that $N_i$ is $d_i$-small.  Let $d = d_1+\cdots+d_k$.
Then the $\FSop$-module $N$ given by the formula 
$$N(E) := \bigoplus_{f:E\twoheadrightarrow[k]} N_1(f^{-1}(1))\otimes\cdots\otimes N_k(f^{-1}(k))$$
is $d$-small.
\end{lemma}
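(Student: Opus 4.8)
The plan is to reduce to the case where each $N_i$ is a principal projective, and then identify the resulting $\FSop$-module $N$ with a direct sum of principal projectives, thereby establishing that $N$ is finitely generated in degrees $\leq d$ (which is stronger than $d$-smallness).

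First I would observe that the construction $N \mapsto (N_1,\ldots,N_k) \rightsquigarrow N$ defined by the formula in the statement is functorial and exact in each argument $N_i$: for a fixed surjection $f:E\twoheadrightarrow[k]$ the summand $N_1(f^{-1}(1))\otimes\cdots\otimes N_k(f^{-1}(k))$ is an exact functor of each $N_i$ (tensoring over a field), and direct sums are exact. Consequently, if each $N_i$ is a subquotient of an $\FSop$-module $N_i'$ that is finitely generated in degrees $\leq d_i$, then $N$ is a subquotient of the corresponding $N'$ built from the $N_i'$. So it suffices to show that if each $N_i'$ is finitely generated in degrees $\leq d_i$, then $N'$ is $d$-small (indeed finitely generated in degrees $\leq d$). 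Since the construction also commutes with finite direct sums in each slot, and a module finitely generated in degrees $\leq d_i$ is a quotient of a finite sum of principal projectives $P_{m}$ with $m\leq d_i$, exactness reduces us further to the case $N_i = P_{m_i}$ with each $m_i \leq d_i$.

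Now I would compute $N$ directly when $N_i = P_{m_i} = \C\{\Hom_{\FS}(-,[m_i])\}$. A basis element of $N(E)$ is a choice of surjection $f:E\twoheadrightarrow[k]$ together with, for each $j$, a surjection $g_j : f^{-1}(j)\twoheadrightarrow[m_j]$. Such data is exactly the same as a surjection $E\twoheadrightarrow [m_1]\sqcup\cdots\sqcup[m_k]$ together with a choice of which block of the target partition-by-$k$ each point $[m_j]$ sits in — but that latter choice is rigid once we fix an identification of $[m_1]\sqcup\cdots\sqcup[m_k]$ with $[m_1+\cdots+m_k]$. More carefully: the pair $(f,(g_j)_j)$ determines, and is determined by, the surjection $h = \bigsqcup_j g_j : E \twoheadrightarrow \bigsqcup_j [m_j]$, since $f$ can be recovered as the composite of $h$ with the projection $\bigsqcup_j[m_j]\to[k]$, and the $g_j$ are the restrictions of $h$. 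Therefore, fixing a bijection $\bigsqcup_{j=1}^k [m_j]\cong [m]$ where $m := m_1+\cdots+m_k \leq d$, we get a natural isomorphism of $\FSop$-modules $N \cong P_{m}$. In particular $N$ is a principal projective in degree $m\leq d$, hence finitely generated in degrees $\leq d$, hence $d$-small.

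Combining the two reductions: for general $d_i$-small $N_i$, choose for each $i$ a module $N_i'$ finitely generated in degrees $\leq d_i$ with $N_i$ a subquotient of $N_i'$; write each $N_i'$ as a quotient of a finite sum $\bigoplus_\alpha P_{m_{i,\alpha}}$ with $m_{i,\alpha}\leq d_i$; by exactness and compatibility with finite direct sums, $N'$ (built from the $N_i'$) is a quotient of a finite direct sum of modules of the form built from principal projectives $(P_{m_{1,\alpha_1}},\ldots,P_{m_{k,\alpha_k}})$, each of which is isomorphic to $P_{m_{1,\alpha_1}+\cdots+m_{k,\alpha_k}}$ with total degree $\leq d$; hence $N'$ is finitely generated in degrees $\leq d$, and $N$, being a subquotient of $N'$, is $d$-small. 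The only real content is the basis bijection identifying the multi-slot construction on principal projectives with a single principal projective of the summed degree; the main thing to be careful about is checking that this bijection is natural in $E$ — i.e. compatible with precomposition by surjections $E'\twoheadrightarrow E$ — which it is, since precomposition acts the same way on $(f,(g_j))$ as on $h=\bigsqcup_j g_j$. Everything else is bookkeeping about exactness of the construction in each argument.
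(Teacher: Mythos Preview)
Your proof is correct and follows essentially the same approach as the paper's: reduce to the case $N_i = P_{m_i}$ with $m_i \leq d_i$, then identify the resulting $N$ with $P_{m_1+\cdots+m_k}$ via the bijection between tuples $(f,(g_j))$ and surjections $E\twoheadrightarrow [m_1]\sqcup\cdots\sqcup[m_k]$. The only difference is that you spell out the reduction step more carefully (exactness in each slot, compatibility with finite direct sums), whereas the paper compresses this into the single remark that $d$-smallness is preserved under direct sums and subquotients.
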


\begin{proof}
Since $d$-smallness is preserved by taking direct sums and passing to subquotients, we may assume that $N_i = P_{m_i}$
for some $m_i\leq d_i$.
Then \begin{eqnarray*}
N(E) &\cong& \bigoplus_{f:E\twoheadrightarrow[k]} P_{m_1}(f^{-1}(1))\otimes\cdots\otimes P_{m_k}(f^{-1}(k))\\
&\cong& \bigoplus_{f:E\twoheadrightarrow[k]} \C\left\{\Hom_{\operatorname{FS}}\!\big(f^{-1}(1),[m_1]\big)\right\}\otimes\cdots\otimes\C\left\{\Hom_{\operatorname{FS}}\!\big(f^{-1}(k),[m_k]\big)\right\}\\
&\cong& \bigoplus_{f:E\twoheadrightarrow[k]} \C\Big\{\Hom_{\operatorname{FS}}\!\big(f^{-1}(1),[m_1]\big)\times\cdots\times\Hom_{\operatorname{FS}}\!\big(f^{-1}(k),[m_k]\big)\Big\}\\
&\cong& \C\Big\{\Hom_{\operatorname{FS}}\!\big(E, [m_1]\sqcup\cdots\sqcup[m_k]\big)\Big\}\\
&\cong& \C\Big\{\Hom_{\operatorname{FS}}\!\big(E, [m_1+\cdots+m_k]\big)\Big\}\\
&\cong& P_{m_1+\cdots+m_k}(E),
\end{eqnarray*}
so $N$ is $d$-small.
\end{proof}

\begin{lemma}\label{shift}
Let $N$ be $d$-small and let $S$ be any set.  Let $N_S$ be the $\FS$-module defined by putting $N_S(E) := N(S\sqcup E)$
for all $E$, with maps defined in the obvious way.  Then $N_S$ is also $d$-small.
\end{lemma}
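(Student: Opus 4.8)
The plan is to follow the template of the proof of Lemma~\ref{general}: reduce to the case of a principal projective and then compute the shift directly. Note first that $S$ must be finite here, since $S\sqcup E$ has to be an object of $\FS$.

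The first step is to observe that the shift operation $N\mapsto N_S$ is exact and commutes with direct sums: it is just precomposition of the functor $N$ with $E\mapsto S\sqcup E$, and exactness and direct sums of $\FSop$-modules are computed objectwise. Since $N$ being $d$-small means it is a subquotient of a finite direct sum $\bigoplus_a P_{m_a}$ with all $m_a\le d$, it follows that $N_S$ is a subquotient of $\bigoplus_a (P_{m_a})_S$. So it suffices to show that $(P_m)_S$ is $d$-small --- in fact finitely generated in degrees $\le m$ --- for each $m\le d$.

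The second step is to identify $(P_m)_S$ explicitly. For a finite set $E$ we have $(P_m)_S(E)=\C\{\Hom_{\FS}(S\sqcup E,[m])\}$, and a surjection $g:S\sqcup E\twoheadrightarrow[m]$ is the same data as the image $T:=g(E)\subseteq[m]$, the induced surjection $E\twoheadrightarrow T$, and the map $S\to[m]$, the last subject only to the constraint $g(S)\supseteq[m]\smallsetminus T$ coming from surjectivity of $g$. The key point is that for a surjection $f:E\twoheadrightarrow E'$ the composite $g\circ(\id_S\sqcup f)$ has the same associated subset $T$ and acts on the $\Hom_{\FS}(-,T)$-factor by precomposition with $f$. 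This gives an isomorphism of $\FSop$-modules
\[
(P_m)_S\;\cong\;\bigoplus_{\emptyset\neq T\subseteq[m]} P_{|T|}^{\oplus c_T},
\qquad
c_T:=\#\{\,k:S\to[m]\ \mid\ k(S)\supseteq[m]\smallsetminus T\,\},
\]
after identifying $\C\{\Hom_{\FS}(-,T)\}$ with $P_{|T|}$ (and discarding the $T=\emptyset$ summand, which vanishes on nonempty sets). Since $S$ is finite, each $c_T$ is finite, so $(P_m)_S$ is a finite direct sum of principal projectives $P_j$ with $1\le j\le m\le d$, hence finitely generated in degrees $\le d$. Combined with the reduction above, this proves that $N_S$ is $d$-small.

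The only genuine content is the naturality check in the second step --- that the stratification of $\Hom_{\FS}(S\sqcup E,[m])$ by the subset $T=g(E)$ is compatible with the $\FSop$-module structure. This is exactly where surjectivity of the morphisms is used: a general map of finite sets could shrink $T$ and spoil the direct-sum decomposition, so the argument really relies on working in $\FS$ rather than $\FA$. Everything else is formal, just as in Lemma~\ref{general}.
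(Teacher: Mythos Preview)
Your proof is correct and follows essentially the same approach as the paper: both reduce to the principal projective $P_m$ and then use the observation that a surjection $g:S\sqcup E\twoheadrightarrow[m]$ is controlled by the subset $T=g(E)$ of size $j\le m$. The paper phrases this as a factorization $g=g'\circ(\id_S\sqcup h)$ through $S\sqcup[j]$ to conclude finite generation in degrees $\le m$, whereas you go one step further and verify that the stratification by $T$ is natural, yielding the explicit direct-sum decomposition $(P_m)_S\cong\bigoplus_{\emptyset\neq T\subseteq[m]} P_{|T|}^{\oplus c_T}$; this is a mild refinement rather than a different route.
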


\begin{proof}
As in the proof of Lemma \ref{general}, we may reduce to the case where $N = P_m$ for $m \leq d$.
In this case, it is sufficient to show that every surjection $f:S\sqcup E\to [m]$ factors as $g\circ (\id_S \sqcup\, h)$,
where $g$ is a surjection from $S\sqcup [j]$ to $[m]$ for some $j\leq m$ and $h$ is a surjection from $[m]$ to $[j]$.
It is clear that we can do this by taking $j$ to be the cardinality of $f(E)$.
\end{proof}

\begin{remark}
The functor $N\mapsto N_S$ is called a {\bf shift functor}, and the analogous operation for FI-modules
has appeared in many contexts; see, for example, \cite[Section 2]{CEFN}.
\end{remark}

Finally, the following lemma will be needed in the proof of Theorem \ref{D-fg}.

\begin{lemma}\label{ri}
Suppose that $N\to N' \to N''$ is a complex of $d$-small $\FSop$-modules, and let $H$ denote its homology in the middle.
If $r_d(N)=0 = r_d(N'')$,
then $r_d(H) = r_d(N')$.
\end{lemma}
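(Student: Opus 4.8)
The plan is to combine the dimension bounds coming from $d$-smallness with the long exact sequence in homology. Write the complex as $N \xrightarrow{\ \alpha\ } N' \xrightarrow{\ \beta\ } N''$, so that $H = \Ker\beta / \im\alpha$. Since $d$-smallness is a property of $\FSop$-modules that is inherited by submodules and quotients (this is used repeatedly in the excerpt, e.g.\ in the proofs of Lemmas \ref{general} and \ref{shift}), the modules $\im\alpha \subseteq N'$, $\Ker\beta \subseteq N'$, and $H$ are all $d$-small, so all five of the limits $r_d(N), r_d(N'), r_d(N''), r_d(\im\alpha), r_d(\Ker\beta), r_d(H)$ are guaranteed to exist by statement 5 of Theorem \ref{dsmall}.

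The key computation is bookkeeping with dimensions in each degree $n$. First I would use the short exact sequence $0 \to \Ker\alpha \to N(n) \to \im\alpha(n) \to 0$ together with $0 \le \dim\im\alpha(n) \le \dim N(n)$ and the hypothesis $r_d(N) = 0$ to conclude $r_d(\im\alpha) = 0$: indeed $0 \le \dim\im\alpha(n)/d^n \le \dim N(n)/d^n \to 0$. Dually, from the short exact sequence $0 \to \im\beta \to N''(n)$ and the hypothesis $r_d(N'') = 0$ one gets $r_d(\im\beta) = 0$, and then from $0 \to \Ker\beta \to N'(n) \to \im\beta(n) \to 0$ one obtains $\dim\Ker\beta(n) = \dim N'(n) - \dim\im\beta(n)$, so dividing by $d^n$ and taking limits gives $r_d(\Ker\beta) = r_d(N') - r_d(\im\beta) = r_d(N')$. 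Finally, the short exact sequence $0 \to \im\alpha(n) \to \Ker\beta(n) \to H(n) \to 0$ yields $\dim H(n) = \dim\Ker\beta(n) - \dim\im\alpha(n)$, and dividing by $d^n$ and passing to the limit gives $r_d(H) = r_d(\Ker\beta) - r_d(\im\alpha) = r_d(N') - 0 = r_d(N')$, as desired.

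I do not anticipate a serious obstacle here; the only point requiring care is to confirm that every module appearing (the image, the kernel, the homology) really is $d$-small so that the relevant limits exist and can be manipulated — this is exactly the closure of $d$-smallness under subquotients, which is stated just after the definition of $d$-small in the excerpt and used freely elsewhere. With that in hand, the argument is just additivity of dimension across short exact sequences followed by dividing by $d^n$ and taking limits, using the squeeze $0 \le \dim(\text{submodule})(n) \le \dim(\text{module})(n)$ to kill the two terms with vanishing $r_d$.
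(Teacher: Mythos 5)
Your argument is correct and is essentially the same as the paper's: the paper compresses your chain of short exact sequences into the single sandwich inequality $\dim N'(n) - \dim N(n) - \dim N''(n) \leq \dim H(n) \leq \dim N'(n)$ and then divides by $d^n$. (The paper's displayed upper bound reads $\dim N(n)$ rather than $\dim N'(n)$, which is evidently a typo, as your more explicit bookkeeping makes clear.)
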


\begin{proof}
This follows from the fact that $\dim N'(n) - \dim N(n) - \dim N''(n) \leq \dim H(n) \leq \dim N(n)$
and the definition of $r_d$.
\end{proof}

\section{Configurations of points in the plane}\label{sec:config}
For any finite set $E$, let $\Conf(E)$ be the space of injective maps from $E$ to $\R^2$.
Arnol'd \cite{Arnold} proved that $$H^*(\Conf(E); \C) \cong \Lambda_\C\left[x_{jk}\mid j,k \in E\right] \Big{/} \Big\langle x_{jj},\;\; x_{jk} - x_{kj},\;\; x_{jk}x_{kl} + x_{kl}x_{lj} + x_{lj}x_{jk}\Big\rangle.$$
Let $H^i(E) := H^i(\Conf(E); \C)$ and $H_i(E) := H_i(\Conf(E); \C) \cong H^i(\Conf(E); \C)^*$.
Given a map $f:E\to F$, we have a map $H^*(\Conf(E); \C)\to H^*(\Conf(F); \C)$ taking $x_{jk}$ to $x_{f(j)f(k)}$.  This gives $H^i$ the structure
of an $\FA$-module and $H_i$ the structure of an $\FAop$-module.  Since $\FS$ is a subcategory of $\FA$, we may regard $H^i$ as an $\FS$-module
and $H_i$ as an $\FSop$-module.

\begin{proposition}\label{H-fg}
The $\FSop$-module $H_0$ is 1-small.  If $i\geq 1$, then $H_i$ is $2i$-small and $r_{2i}(H_i) = 0$.
\end{proposition}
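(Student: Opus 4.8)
The plan is to exhibit, for each $i\geq 0$, an explicit presentation of $H_i$ as a subquotient of a finitely generated $\FSop$-module with the correct generation degree, and then to control the asymptotic ratio $r_{2i}$ by comparing dimensions. The starting point is Arnol'd's presentation of $H^*(\Conf(E);\C)$, which realizes $H^i(E)$ as a quotient of the degree-$i$ part of an exterior algebra on the classes $x_{jk}$ with $j\neq k\in E$; dualizing, $H_i(E)$ is a subspace of the degree-$i$ part of the dual exterior algebra, spanned by duals of monomials $x_{j_1k_1}\cdots x_{j_ik_i}$. Each such monomial involves at most $2i$ elements of $E$, so it lies in the image of $H_i$ along the surjection $E\twoheadrightarrow$ (its support of size $\leq 2i$); this should show that $H_i$ is generated in degrees $\leq 2i$ as an $\FSop$-module. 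To promote this to $2i$-smallness I would write down an honest surjection onto $H_i$ from a direct sum of principal projectives $P_m$ with $m\leq 2i$: a basis element of $\Hom_{\FS}(E,[m])$ composed with a choice of $i$-fold monomial supported on $[m]$ gives an element of $H_i(E)$, functorially in $E$, and every element of $H_i(E)$ is a combination of such. For $i=0$ one has $H_0(E)=\C$ for all nonempty $E$, which is exactly $P_1$, so $H_0$ is $1$-small (indeed a principal projective).

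The more delicate half is the vanishing $r_{2i}(H_i)=0$ for $i\geq 1$. The point is that although the generators sit in degree $\leq 2i$, the actual dimension of $H_i(E)=H_i(\Conf(E);\C)$ for $|E|=n$ grows strictly slower than $(2i)^n$. I would estimate $\dim H_i(\Conf_n;\C)$ directly from Arnol'd's description: a spanning set for $H^i$ is indexed by certain "forests" — products of $i$ of the generators $x_{jk}$ subject to the Arnol'd relations — and the number of such is $O(n^{2i})$ (polynomial in $n$), not exponential. In fact $\sum_i \dim H^i(\Conf_n;\C)\, t^i = \prod_{k=1}^{n-1}(1+kt)$, so for fixed $i$ the coefficient is a polynomial in $n$ of degree $2i$. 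Dividing by $(2i)^n$ and letting $n\to\infty$ gives $0$. So the key computational input is the classical formula for the Poincaré polynomial of the (ordered) configuration space of points in $\R^2$, from which the polynomial-in-$n$ growth of each fixed Betti number is immediate.

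The main obstacle I anticipate is not either ingredient in isolation but making the $\FSop$-module map from $\bigoplus P_m$ onto $H_i$ genuinely functorial and well-defined modulo the Arnol'd relations: one must check that the relation $x_{jk}x_{k\ell}+x_{k\ell}x_{\ell j}+x_{\ell j}x_{jk}=0$, which lives on a $3$-element support, is compatible with the $\FSop$-structure maps (precomposition with surjections), so that the surjection descends. This is a routine but slightly fiddly check: it amounts to saying that the subspace of relations in the free $\FSop$-module is itself a sub-$\FSop$-module, which follows because pulling back along a surjection $E'\twoheadrightarrow E$ sends an Arnol'd monomial on a support $T\subseteq E$ to a sum of Arnol'd monomials on supports contained in the preimage of $T$, respecting the three-term relation. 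Once that is in place, $2i$-smallness follows from the definition, and combining it with the Poincaré-polynomial estimate via the definition of $r_{2i}$ finishes the proof. For $i=0$ no relations are present and the identification $H_0\cong P_1$ is direct.
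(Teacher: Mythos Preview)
Your treatment of $H_0\cong P_1$ and of $r_{2i}(H_i)=0$ via the Poincar\'e polynomial $\prod_{k=1}^{n-1}(1+kt)$ is correct; the latter is a mild variant of the paper's argument, which instead notes $\dim H_1(n)=\binom{n}{2}$ and passes to the inclusion $H_i\subset H_1^{\otimes i}$.

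There is, however, a genuine gap in your argument for $2i$-smallness. You write that a (dual) monomial with support $S\subset E$ of size at most $2i$ ``lies in the image of $H_i$ along the surjection $E\twoheadrightarrow S$,'' but no such surjection is specified: $S$ is a subset of $E$, not a quotient. If you choose a surjection $E\to S$ by collapsing $E\smallsetminus S$ into $S$, the dual map $H_i(S)\to H_i(E)$ does \emph{not} send the dual monomial to a single dual monomial---it picks up extra terms from the enlarged fibers. The version that does hit the desired element cleanly is the surjection $E\twoheadrightarrow S\sqcup\{*\}$ sending $E\smallsetminus S$ to $*$, but this has target of size $|S|+1\leq 2i+1$, so you only obtain generation in degree $\leq 2i+1$. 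Getting from $2i+1$ down to $2i$ is exactly the nontrivial step, and it is the one the paper supplies: the paper embeds $H_i\hookrightarrow H_1^{\otimes i}$ and then proves $H_1$ is generated in degree $\leq 2$ (not merely $\leq 3$) by a parity-map trick---the three surjections $[3]\to[2]$ push $e_{12}\in H_1([2])$ to $e_{12}+e_{23}$, $e_{13}+e_{23}$, and $e_{12}+e_{13}$, which span $H_1([3])$. Your proposal lacks any analogue of this reduction.

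Finally, the obstacle you anticipate---checking that the Arnol'd relations are compatible with the $\FSop$-structure so that the map from $\bigoplus P_m$ ``descends''---is not a real issue. A map $P_m\to H_i$ is, by the universal property of the principal projective, simply an element of $H_i([m])$; there is no compatibility to verify. The actual obstacle is the off-by-one in the generation degree described above.
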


\begin{proof}
We have $H_0\cong P_1$, so $H_0$ is 1-small.  For the remainder of the proof, fix a natural number $i>0$.
Since $H^*(E)$ is generated in degree 1, $H^i(E)$ is a quotient of $H^1(E)^{\otimes i}$. This means that $H_i(E)$ is a subspace of $H_1(E)^{\otimes i}$,
thus to prove $2i$-smallness it will suffice to show that $H_1^{\otimes i}$ is finitely generated in degrees $\leq 2i$.\footnote{In the published version
of this paper, we incorrectly asserted that it would suffice to show that $H_1$ is finitely generated in degree 2.  In fact, this would only prove that
 $H_1^{\otimes i}$ is finitely generated in degrees $\leq 2^i$.}

For any $j\neq k\in E$, let $e_{jk}\in H_1(E)$ be the linear functional that evaluates to 1 on $x_{jk}=x_{kj}$ and to 0 on all other $x_{mn}$.
Then $H_1(E)^{\otimes i}$ has a basis given by elements of the form $e_{j_1 k_1}\otimes\cdots \otimes e_{j_ik_i}$.  We will show that such a basis element
can be written as a linear combination of classes that are pulled back from sets of cardinality at most $2i$.  
We may assume that $|E|>2i$, otherwise there is nothing to show.

Let $F := \{j_1, k_1, \ldots, j_i, k_i\} \sqcup \{0\}$; this is a set of cardinality at most $2i+1$.  Define a surjection from $E$ to $F$ by sending each of the elements
of the set $\{j_1, k_1, \ldots, j_i, k_i\}$ to itself
and sending everything else to 0.  Then the class $e_{j_1 k_1}\otimes\cdots \otimes e_{j_ik_i}$ pulls back to itself.  If $|F|\leq 2i$, we are done.  
However, if $|F|=2i+1$, we still need to show that $e_{j_1 k_1}\otimes\cdots \otimes e_{j_ik_i}$ can be expressed as a sum of pullbacks of classes from smaller sets.  

Let $G$ be the set of cardinality $2i$ obtained from $F$ by identifying $k_i$ with $0$.  Then we have a canonical surjective map from $F$ to $G$, and the pullback
of $e_{j_1 k_1}\otimes\cdots \otimes e_{j_ik_i}$ along this map is equal to
$$e_{j_1 k_1}\otimes\cdots \otimes e_{j_ik_i}\; +\; e_{j_1 k_1}\otimes\cdots \otimes e_{j_i0}.$$
By symmetry, we can also obtain the following two classes as pullbacks along surjective maps to sets of cardinality $2i$:
\begin{align*}e_{j_1 k_1}\otimes\cdots \otimes e_{j_ik_i}\; &+\; e_{j_1 k_1}\otimes\cdots \otimes e_{k_i0}\\
e_{j_1 k_1}\otimes\cdots \otimes e_{j_i0}\; &+\; e_{j_1 k_1}\otimes\cdots \otimes e_{k_i0}.\end{align*}
By adding the first two classes, subtracting the third, and dividing by 2, one obtains the class $e_{j_1 k_1}\otimes\cdots \otimes e_{j_ik_i}$.
This completes the proof that $H_i$ is $2i$-small.

For the last statement, we begin by noting that $\dim H_1(n) = \binom{n}{2}$, therefore $$r_2(H_1^{\otimes i}) = \lim_{n\to \infty} (2i)^{-n}\binom{n}{2}^i = 0.$$
Since $H_i\subset H_1^{\otimes i}$, we have $r_{2i}(H_i)=0$, as well.
\end{proof}

\begin{remark}
The second statement of Proposition \ref{H-fg} also follows from the fact that $H^i$ is finitely generated as an FI-module
\cite[Theorem 6.2.1]{church-ellenberg-farb}.  (More generally, they prove this with $\R^2$ replaced
by any connected, oriented manifold of dimension greater than 1 with finite dimensional cohomology.)
This implies that the dimension of $H^i(n)$ grows as a polynomial in $n$
\cite[Theorem 1.5]{church-ellenberg-farb}, thus the same is true for the dimension of the $\FSop$-module $H_i(n)\cong H^i(n)^*$.
\end{remark}

For any $p\geq 0$, let
\begin{eqnarray*}
\Comp_{p,i}(E)\; &:=& \bigoplus_{f:E\twoheadrightarrow [p+1]} \Big(H_\bullet(f^{-1}(1))\otimes\cdots\otimes H_\bullet(f^{-1}(p+1))\Big)_{\!i}\\
&\cong& \bigoplus_{\substack{f:E\twoheadrightarrow [p+1]\\ i_1 + \cdots + i_{p+1} = i}} H_{i_1}(f^{-1}(1))\otimes\cdots\otimes H_{i_{p+1}}(f^{-1}(p+1)).
\end{eqnarray*}
It is clear that $\Comp_{p,i}$ comes endowed with a natural
$\FSop$-module structure.

\begin{proposition}\label{cp}
The $\FSop$-module $\Comp_{p,0}$ is $(p+1)$-small, and $\Comp_{p,i}$ is $(p+2i)$-small for all $i\geq 1$.
\end{proposition}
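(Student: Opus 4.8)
**The plan is to reduce Proposition \ref{cp} directly to the two lemmas just proved (Lemma \ref{general} and Lemma \ref{shift}) together with the smallness facts about $H_i$ recorded in Proposition \ref{H-fg}.** The module $\Comp_{p,i}(E)$ is, by its very definition, a direct sum over surjections $f: E \twoheadrightarrow [p+1]$ of tensor products $H_{i_1}(f^{-1}(1)) \otimes \cdots \otimes H_{i_{p+1}}(f^{-1}(p+1))$ over all ways of writing $i_1 + \cdots + i_{p+1} = i$ with each $i_j \geq 0$. Since $d$-smallness is preserved under direct sums (and passing to subquotients), it suffices to fix one composition $(i_1,\ldots,i_{p+1})$ of $i$ into $p+1$ nonnegative parts and show that the corresponding summand $\Comp_{p,i}^{(i_1,\ldots,i_{p+1})}(E) := \bigoplus_{f:E\twoheadrightarrow[p+1]} H_{i_1}(f^{-1}(1))\otimes\cdots\otimes H_{i_{p+1}}(f^{-1}(p+1))$ is $d$-small for the appropriate $d$, and then sum the bounds.

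**The main step is a bookkeeping argument on the exponents $d_j$ attached to each factor.** For each $j$, Proposition \ref{H-fg} tells us $H_{i_j}$ is $d_j$-small where $d_j = 1$ if $i_j = 0$ and $d_j = 2 i_j$ if $i_j \geq 1$. Lemma \ref{general}, applied with $k = p+1$ and $N_j = H_{i_j}$, then shows the fixed-composition summand is $d$-small with $d = \sum_{j=1}^{p+1} d_j$. So everything comes down to maximizing $\sum d_j$ over compositions of $i$. If $i = 0$, every $i_j = 0$, so $d_j = 1$ for all $j$ and $d = p+1$; this gives the first claim, that $\Comp_{p,0}$ is $(p+1)$-small. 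If $i \geq 1$, write $m$ for the number of indices $j$ with $i_j \geq 1$; these indices contribute $\sum_{i_j \geq 1} 2 i_j = 2i$, and the remaining $p+1-m$ indices contribute $1$ each, so $d = 2i + (p+1-m)$. Since $i \geq 1$ forces $m \geq 1$, we get $d \leq 2i + p$, with equality when exactly one $i_j$ equals $i$ and the rest vanish. Hence each summand is $(p+2i)$-small (or smaller), and taking the direct sum over all compositions, $\Comp_{p,i}$ is $(p+2i)$-small.

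**Two small points deserve care, and I expect the only genuine friction to be in getting the edge cases right rather than in any deep argument.** First, Lemma \ref{general} as stated requires all the $N_j$ to be $\FSop$-modules on nonempty finite sets, so one should note that $f^{-1}(j) \neq \emptyset$ for a surjection $f: E \twoheadrightarrow [p+1]$; this is automatic. Second, one might worry whether $H_0$ really should be assigned exponent $1$ rather than $0$: indeed $H_0 \cong P_1$ is $1$-small and not $0$-small (the empty set is not even an object of $\FS$, and $P_0$ makes no sense), so the "$+1$ per trivial factor" bookkeeping is forced and correct. With these observations the proof is essentially immediate: invoke Proposition \ref{H-fg} for the per-factor exponents, Lemma \ref{general} for the tensor product over a surjection, closure of smallness under direct sums to combine over all compositions of $i$ into $p+1$ parts, and the elementary maximization above to see the total exponent is $p+1$ when $i=0$ and at most $p+2i$ when $i \geq 1$. (The shift lemma, Lemma \ref{shift}, is not needed here — it will be used in the relative situation later — so I would not invoke it.)
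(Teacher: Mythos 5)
Your proof is correct and follows essentially the same route as the paper: reduce to a single composition $(i_1,\ldots,i_{p+1})$, apply Lemma \ref{general} with the per-factor smallness exponents from Proposition \ref{H-fg} (1 when $i_j=0$, $2i_j$ otherwise), and maximize the sum over compositions. The only cosmetic difference is that the paper parametrizes by the number of zero parts while you parametrize by the number of nonzero parts, which yields the identical bound.
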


\begin{proof}
By Lemma \ref{general} and Proposition \ref{H-fg}
the summand of $\Comp_{p,i}$ corresponding to the tuple $(i_1,\ldots,i_{p+1})$ is $(d+2i)$-small, where $d$ is the number
of $k$ such that $i_k=0$.  When $i=0$, we have $d=p+1$.  When $i>0$, the maximum value of $d$ is $p$.
\end{proof}

\section{The main theorem}\label{sec:kl}
For any finite set $E$, let $\cI_{E} := \{(i,j)\mid i\neq j\in E\}$, and define $V_E\subset \C^{\cI_E}$
in a manner analogous to the definition of $V_n\subset \C^{\cI_n}$ in Section \ref{sec:kl}.  In particular, we have 
$\cI_{[n]} = \cI_n$ and $V_{[n]} = V_n$.  Define the reciprocal plane $X_E := X(V_E)$, and let
$D_i(E) := I\! H^{2i}\big(X_E; \C\big)$.  By Theorem \ref{ih}, $D_i(E)$ is the
$i^\text{th}$ $\Aut(E)$-equivariant Kazhdan-Lusztig coefficient
of the matroid $M_E$ associated with the complete graph on the vertex set $E$.  
In particular, if we take $E = [n]$, we have $D_i(n) = C^{S_n}_{M_n, i}$.

A surjective map of sets $E\to F$ is equivalent to the data of a partition of $E$ along with a bijection between $F$ and the set of parts
of the partition.  A partition of $E$ determines a flat of $M_E$, and the bijection between $F$ and the set of parts of the partition
determines an isomorphism from $X_F$ to $X\!\left((V_E)^F\right)$.  
Thus, Theorem \ref{canon}(1) gives us a map from $D_i(E)$ to $D_i(F)$,
and the first half of Theorem \ref{canon}(3) tells us that $D_i$ is an FS-module.

For any non-negative integers $p,q$, define 
$$A_i^{p,q}(E) := \Comp_{p,2i-p-q}(E) \otimes D_{i-q}^*(p+1).$$
Since $\Comp_{p,2i-p-q}$ is an $\FSop$-module with an action of the symmetric group $S_{p+1}$ (given by permuting the pieces of the composition)
and $D_{i-q}(p+1)^*$ is a fixed vector space equipped with an action of $S_{p+1}$, 
$A_i^{p,q}$ inherits the structure of an $\FSop$-module with an action of the symmetric group $S_{p+1}$.
Let $B_i^{p,q} := (A_i^{p,q})^{S_{p+1}}$ be the invariant submodule, and let $(B_i^{p,q})^*$ be the dual $\FS$-module.
By Corollary \ref{ss-braid}, we have a first quadrant cohomological spectral sequence with $E_1$ page $B_i^{p,q}(E)^*$
that converges to $D_i(E)$.  In other words, $D_i(E)$ admits a filtration whose associated graded is isomorphic to the $E_\infty$
page of this spectral sequence.
By the second half of Theorem \ref{canon}(3), each $(B_i^{p,q})^*$
admits the structure of an FS-module such that the FS-module maps commute with the differentials in the spectral sequence.
By Theorem \ref{canon}(4), the FS-module structure on $(B_i^{p,q})^*$ coming from Theorem \ref{canon}(3)
coincides with the FS-module structure that we defined explicitly.

\begin{theorem}\label{D-fg}
For all $i\geq 1$, the $\FSop$-module $D_i^*$ admits a filtration\footnote{In the published version of this paper, we claimed
$D_i^*$ is $2i$-small.  We do not know whether or not this is true.} whose associated graded is $2i$-small, and we have 
$$r_{2i}(D_i^*) = \frac{\dim D_{i-1}(2i)}{(2i)!}.$$
\end{theorem}

\begin{proof}
To prove that $D_i^*$ admits a filtration whose associated graded is $2i$-small, we will prove that the $E_\infty$ page of our (dualized) spectral sequence
is $2i$-small.
Since smallness is preserved under taking subquotients, it suffices to prove that the $E_1$ page is $2i$-small,
which means proving that $B_i^{p,q}$ is $2i$-small for all $p$ and $q$.
Since $B_i^{p,q}\subset A_i^{p,q}$, it suffices to prove it for $A_i^{p,q}$.  By Proposition \ref{cp} and the fact that smallness is preserved by taking a tensor
product with a fixed vector space, $A_i^{p,q}$ is $(p+1)$-small when $p+q=2i$ and $(p+2(2i-p-q))$-small otherwise.

Consider the case where $p+q=2i$.  
By definition of the equivariant Kazhdan-Lusztig polynomial, $D_i(E) = 0$ unless $2i<|E|-1$ or $|E|=1$ and $i=0$.
In particular, if $p=2i$ and $q=0$, then $D_{i-q}(p+1) = D_i(2i) = 0$, and therefore $A_i^{p,q} = 0$.  
Thus we may assume that $p<2i$.  Since $A_i^{p,q}$ is $(p+1)$-small
it is also $2i$-small.

Next, consider the case where $p+q<2i$, so $A_i^{p,q}$ is $(p+2(2i-p-q))$-small.
By the above vanishing property for $D_i(E)$, we have
$D_{i-q}(p+1) = 0$ unless $2(i-q) < p$ or $p=0$ and $q=i$.  
Thus we may conclude that
$A_i^{p,q}=0$
unless $$p + 2(2i-p-q)+p = 2(i-q) - p + 2i < 2i
\qquad\text{or}\qquad\text{$p=0$ and $q=i$}.$$  In particular, $A_i^{p,q}$ is $2i$-small, and therefore so is $D_i^*$.

This argument in fact proves that $A_i^{p,q}$ is $(2i-1)$-small unless $(p,q) = (0,i)$ or $(2i-1,1)$,
and the same is therefore true for $B_i^{p,q}$.  Furthermore, we have $B_i^{0,i} \cong H_i$, and 
Proposition \ref{H-fg} tells us that $r_{2i}(H_i)=0$.  Thus $r_{2i}(B_i^{p,q}) = 0$ unless $(p,q) = (2i-1,1)$,
and Lemma \ref{ri} therefore tells us that $r_{2i}(D_i^*) = r_{2i}(B_i^{2i-1,1})$.

We have $B_i^{2i-1,1} \cong (\Comp_{2i-1,0})^{S_{2i}}\otimes D^*_{i-1}(2i)$,
where $(\Comp_{2i-1,0})^{S_{2i}}$ is the $\FSop$-module that takes $E$ to a vector space
with basis given by partitions of $E$ into $2i$ nonempty pieces.  This means that $\dim (\Comp_{2i-1,0})^{S_{2i}}(n)$
is equal to the Stirling number of the second kind $S(n,2i)$, thus
$$r_{2i}(D_i^*) = r_{2i}(B_i^{2i-1,1}) = \lim_{n\to\infty}\frac{\dim B_i^{2i-1,1}(n)}{(2i)^n} = \lim_{n\to\infty}\frac{S(n,2i) \dim D_{i-1}(2i)}{(2i)^n}
= \frac{\dim D_{i-1}(2i)}{(2i)!},$$
and the theorem is proved.
\end{proof}

Let $H_i(u) := H_{D_i^*}(u)$ and $G_i(u) := G_{D_i^*}(u)$. 
Since representations of finite groups are self-dual, $H_i(u)$ and $G_i(u)$ may be regarded as generating functions
(ordinary and exponential) for the degree $i$ Kazhdan-Lusztig coefficients of braid matroids.
The following corollary follows from Theorems \ref{dsmall} and \ref{D-fg}, along with the fact that representations of finite groups are semisimple,
so passing to the associated graded of a filtered representation does not change the isomorphism type.

\begin{corollary}\label{cor}  Let $i$ be a positive integer.
\begin{enumerate}
\item If $\la\vdash n$ and $\Hom_{S_n}(V_\la, D_i(n)) \neq 0$, then $\ell(\la)\leq 2i$.  
\item For any partition $\la$ with $n \geq |\la| + \la_1 $, 
$\dim\Hom_{S_n}\!\big(V_{\la(n)}, D_i(n)\big)$ is bounded by a polynomial in $n$ of degree at most $2i-1$.
\item The ordinary generating function $H_i(u)$
is a rational function whose poles are contained in the set $\{1/j \mid 1\leq j \leq 2i\}$.
Furthermore, $H_i(u)$ has at worst a simple pole at $1/2i$.
\item There exists polynomials $p_0(u),\ldots,p_{2i}(u)$ such that the exponential generating function
$G_i(u)$ is equal to $$\sum_{j=0}^d p_j(u)e^{ju}.$$
Furthermore, $p_{2i}(u)$ is equal to the constant polynomial with value $r_{2i}(D_i^*) = \frac{\dim D_{i-1}(2i)}{(2i)!}$.
\end{enumerate}
\end{corollary}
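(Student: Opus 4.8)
The plan is to obtain all four statements as direct consequences of Theorem \ref{dsmall}, applied to the $\FSop$-module $N = D_i^*$ with $d = 2i$. Theorem \ref{D-fg} supplies exactly the two inputs this requires: that $D_i^*$ is $2i$-small (valid for $i\geq 1$, which is the hypothesis of the corollary), and the value $r_{2i}(D_i^*) = \dim D_{i-1}(2i)/(2i)!$ needed for the ``furthermore'' clauses of statements 3 and 4. Thus the entire content of the proof is a bookkeeping translation between statements about the dual module $D_i^*$ and statements about $D_i$ itself.

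First I would record the self-duality dictionary. For each $n$, the space $D_i(n) = \IH^{2i}(X_n;\C)$ is a finite-dimensional complex representation of the finite group $S_n$, hence isomorphic as an $S_n$-representation to its dual $D_i^*(n)$; moreover every irreducible $S_n$-representation $V_\la$ is self-dual. Consequently the multiplicity of $V_\la$ in $D_i(n)$ equals its multiplicity in $D_i^*(n)$, i.e.
$$\dim\Hom_{S_n}\!\big(V_\la, D_i(n)\big) = \dim\Hom_{S_n}\!\big(V_\la, D_i^*(n)\big),$$
and in particular $\dim D_i(n) = \dim D_i^*(n)$, so that the generating functions $H_i(u) = H_{D_i^*}(u)$ and $G_i(u) = G_{D_i^*}(u)$ defined just before the corollary genuinely are the ordinary and exponential generating functions for the degree-$i$ Kazhdan--Lusztig coefficients of the braid matroids (using Theorem \ref{ih}).

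With this dictionary in hand, statement 1 is Theorem \ref{dsmall}(1), statement 2 is Theorem \ref{dsmall}(2), the location of the poles asserted in statement 3 is Theorem \ref{dsmall}(3), the simplicity of the pole at $1/2i$ in statement 3 is the first half of Theorem \ref{dsmall}(5), the existence of the polynomials $p_0,\ldots,p_{2i}$ in statement 4 (with $d = 2i$) is Theorem \ref{dsmall}(4), and the identification of $p_{2i}$ as the constant polynomial $r_{2i}(D_i^*)$ is the second half of Theorem \ref{dsmall}(5); the explicit value of that constant is the last assertion of Theorem \ref{D-fg}. There is no genuine obstacle here, since the corollary is a formal consequence of the two cited theorems — all of the actual work has already been done in establishing $2i$-smallness (Theorem \ref{D-fg}) and in the structural results of Sam and Snowden packaged as Theorem \ref{dsmall}. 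The only point deserving a word of care is that statements 1 and 2 concern $D_i$ rather than $D_i^*$, so one must be sure to invoke self-duality at the level of multiplicities and not merely of dimensions.
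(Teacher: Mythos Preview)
Your proposal is correct and matches the paper's approach exactly: the paper states that the corollary ``follows immediately from Theorems \ref{dsmall} and \ref{D-fg}'' without further elaboration, and you have spelled out precisely how each item corresponds to a clause of Theorem \ref{dsmall} together with the self-duality observation the paper makes just before stating the corollary.
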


\begin{remark}
Theorem \ref{D-fg} and Conjecture \ref{top} combine to say that
$$r_{2i}(D_i^*) = \frac{(2i-3)!!(2i-1)^{i-2}}{(2i)!} = \frac{(2i-1)^{i-3}}{2^i\, i!}.$$
In particular, if Conjecture \ref{top} is true (or more generally if $D_{i-1}(2i)\neq 0$), then $H_i(u)$ does have a pole at $1/2i$.
\end{remark}

\section{Examples}
We now example the cases when $i=1$ or 2 in greater detail. 

\begin{example}
We first consider the case when $i=1$.
In \cite[Proposition 4.4]{GPY}, we showed that $\Hom_{S_n}(V_\la, D_1(n)) = 0$ for all $\la$ with more than 2 rows,
and that $\dim\Hom_{S_n}\!\big(V_{[k](n)}, D_1(n)\big)$ is bounded by $n/2 + 1 - k$.
By \cite[Corollary 2.24]{EPW}, we have $\dim D_1(n) = 2^{n-1} - 1 - \binom{n}{2}$, which implies that 
$$H_1(u) = \frac{u^4}{(1-u)^3(1-2u)}$$
and
$$G_1(u) = \frac{1}{2} + \left(\frac{u^2}{2} - 1\right)e^u + \frac 1 2 e^{2u}.$$
In particular, 
$r_2(D_1^*) = 1/2 = \dim D_0(2)/2!$.\\
\end{example}

\begin{example}
We next consider the case when $i=2$.  By
\cite[Corollary 2.24]{EPW}, we have
$$\dim D_2(n) = s(n,n-2) - S(n,n-1)S(n-1,2) + S(n,3) + S(n,4),$$
where $s(n,k)$ and $S(n,k)$ are Stirling numbers of the first and second kind, respectively.
We have well-known generating function identities $$\sum_{n\geq 1} S(n,k) u^n = \frac{u^k}{\prod_{j=1}^k (1-ju)},$$
as well as \cite[A000914]{oeis}
$$\sum_{n\geq 1} s(n,n-2) u^n = \frac{2u^3+u^4}{(1-u)^5}.$$
Since $S(n,n-1)S(n-1,2) = \binom{n}{2}\left(2^{n-2}-1\right)$, it is not hard to show that
$$\sum_{n\geq 1}S(n,n-1)S(n-1,2) u^n = \frac{u^2}{(1-2u)^3} - \frac{u^2}{(1-u)^3}.$$
Putting it all together, we get
\begin{eqnarray*}H_{2}(u) &=& \frac{2u^3+u^4}{(1-u)^5} - \left(\frac{u^2}{(1-2u)^3} - \frac{u^2}{(1-u)^3}\right)\\
&& + \frac{u^3}{(1-u)(1-2u)(1-3u)} + \frac{u^4}{(1-u)(1-2u)(1-3u)(1-4u)}\\\\
&=& \frac{15u^6-50u^7 + 40u^8 + 4u^9}{(1-u)^5(1-2u)^3(1-4u)}.
\end{eqnarray*}
After performing a partial fractions decomposition we find that $r_{4}(D_2^*) = 1/24 = \dim D_1(4)/4!$.

We do not have a general formula for the dimension of $\Hom_{S_n}(V_\la, D_2(n))$, but we have computed $D_2(n)$ for all $n\leq 9$ 
\cite[Section 4.4]{GPY}, and it is indeed the case in these examples that the multiplicity of $V_\la$ in $D_2(n)$ is zero whenever $\la$ has more than 4 rows.
\end{example}

\section{The relative case}
Let $\Gamma$ be a finite graph with vertex set $V$. 
For any finite set $E$, let $\Gamma(E)$ be the graph with vertex set $V\sqcup E$ such that two elements of $V$ are adjacent
if and only if they were adjacent in $\Gamma$, and elements of $E$ are adjacent to everything.
We will define an FS-module structure on the $i^\text{th}$ $\Aut(E)$-equivariant
Kazhdan-Lusztig coefficient $D_i^\Gamma(E)$ of the matroid associated with the graph $\Gamma(E)$,
and prove that the dual $\FSop$-module admits a filtration whose associated graded is $2i$-small.
If $\Gamma$ is the empty graph, then $\Gamma(E)$ is just the complete graph on $E$, so we have $D_i^\Gamma = D_i$.

We begin by generalizing the material in Section \ref{sec:config}.  Let $\Gamma = (V,Q)$ be a finite graph with vertex set $V$ and edge set $Q$,
and let $\Conf(\Gamma)$ be the set of maps from $V$ to $\R^2$ that send adjacent vertices to distinct points.
We have the following description of the cohomology ring of $\Conf(\Gamma)$ \cite[Theorems 3.126 and 5.89]{OT}:
\begin{eqnarray*}
H^*(\Conf(\Gamma); \C) &\cong& \Lambda_\C[x_q]_{q\in Q} \Big{/} \left\langle\, \sum_{j=1}^k (-1)^j x_{q_1}\cdots \hat x_{q_j} \cdots x_{q_k} \;\;\Big{|}\;\; (q_1,\ldots,q_k)\;\text{a closed path}\right\rangle\\\\
&\cong& \text{the subring of all meromorphic differential forms on $\C^V$}\\ 
&& \text{generated by $\frac{dz_i-dz_j}{z_i-z_j}$ for all
$\{i,j\}\in Q$.}
\end{eqnarray*}
By definition, a map from $\Gamma = (V,Q)$ to $\Gamma' = (V',Q')$ is a map from $V$ to $V'$ that takes $Q$ to $Q'$.  Given a map $f:\Gamma\to\Gamma'$,
we obtain a map $H^*(\Conf(\Gamma); \C)\to H^*(\Conf(\Gamma'); \C)$ taking $x_q$ to $x_{f(q)}$.
In particular, we obtain an FA-module $H^i_\Gamma(E) := H^i(\Conf(\Gamma(E)); \C)$ and a dual $\FAop$-module 
$H_i^\Gamma(E) := H_i(\Conf(\Gamma(E)); \C)$.  
As in the case where $\Gamma$ is empty, we can regard $H^i_\Gamma$ as an FS-module and $H_i^\Gamma$ as an $\FSop$-module.
The proof of the following proposition is identical to the proof of Proposition \ref{H-fg}.

\begin{proposition}\label{H-fg-Gamma}
The $\FSop$-module $H_0^\Gamma$ is 1-small.  If $i\geq 1$, then $H_i^\Gamma$ is $2i$-small and $r_{2i}(H_i^\Gamma) = 0$.
\end{proposition}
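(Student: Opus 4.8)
The plan is to mimic the proof of Proposition \ref{H-fg} line by line, replacing the complete-graph configuration space by $\Conf(\Gamma(E))$, and checking that the two ingredients used there — namely that $H^*$ is generated in degree $1$, and that $H_1$ is finitely generated in degrees $\leq 2$ as an $\FSop$-module, with $\dim H_1^\Gamma(n)$ growing polynomially — survive the presence of the fixed graph $\Gamma$.

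First I would handle the case $i=0$: since $\Gamma(E)$ is connected for every nonempty $E$ (each element of $E$ is joined to everything), $H_0^\Gamma(E) \cong \C$ with the surjection maps being identities, so $H_0^\Gamma \cong P_1$ and hence is $1$-small. For $i\geq 1$, the presentation of $H^*(\Conf(\Gamma); \C)$ quoted just above shows it is generated by the degree $1$ classes $x_q$, $q\in Q$; applying this to $\Gamma(E)$ shows $H^i_\Gamma(E)$ is a quotient of $H^1_\Gamma(E)^{\otimes i}$, so dually $H_i^\Gamma(E)$ embeds in $H_1^\Gamma(E)^{\otimes i}$, and it suffices (by Lemma \ref{general}, exactly as in Proposition \ref{cp}'s use of smallness under tensor products and subquotients) to prove that $H_1^\Gamma$ is $2$-small. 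For this I would argue, as in the proof of Proposition \ref{H-fg}, that $H_1^\Gamma$ is generated in degrees $\leq 3$: the edge set of $\Gamma(E)$ consists of the old edges $Q$ of $\Gamma$ together with all pairs $\{v,e\}$ and $\{e,e'\}$ meeting $E$. A basis dual class $e_q$ for $q=\{e,e'\}\subset E$ is pulled back from $H_1(\{1,2,3\})$ via the map $\Gamma(E)\to\Gamma(\{1,2,3\})$ collapsing the vertices of $\Gamma$ together with all of $E$ except $e,e'$ onto the vertex $3$; a class $e_q$ for $q=\{v,e\}$ with $v\in V$ is pulled back similarly, collapsing $E\smallsetminus\{e\}$ to the class of $v$; and the classes $e_q$ for $q\in Q$ are pulled back from $\Gamma=\Gamma(\emptyset)$-type information sitting inside $\Gamma(E)$ for any one-element $E$, using the shift structure. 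Then the same parity-map trick ($\{1,2,3\}\to\{1,2\}$) that brings the generation degree down from $3$ to $2$ applies verbatim, since it only involves the three edges among the $E$-vertices.

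A cleaner packaging, which I would actually use, is to invoke Lemma \ref{shift}: write $H_i^\Gamma(E) = H_i(\Conf(\Gamma(E)))$ and observe that $\Conf(\Gamma(E))$ is an open subvariety of the hyperplane-arrangement complement, so that one can regard $H_i^\Gamma$ as obtained from the $\FSop$-module $E\mapsto H_i(\Conf(\mathrm{(complete\ graph\ on\ }V\sqcup E)))$ — which is $2i$-small with $r_{2i}=0$ by Proposition \ref{H-fg} — together with a shift by the fixed set $V$ and passage to a subquotient (killing the relations coming from the missing edges of $\Gamma$). Since $2i$-smallness is preserved by shift (Lemma \ref{shift}) and by subquotients, and $r_{2i}=0$ only improves under passing to a subquotient (as $\dim$ can only drop), both conclusions follow. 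For the vanishing statement itself I would, as in Proposition \ref{H-fg}, first note that $\dim H_1^\Gamma(n)$ equals the number of edges of $\Gamma(n)$, which is $|Q| + n|V| + \binom{n}{2}$, a polynomial in $n$, so $r_2(H_1^\Gamma)=0$ and hence $r_{2i}\big((H_1^\Gamma)^{\otimes i}\big)=r_2(H_1^\Gamma)^i=0$; since $H_i^\Gamma\subset (H_1^\Gamma)^{\otimes i}$, we get $r_{2i}(H_i^\Gamma)=0$.

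The only real obstacle is making sure the degree-$\leq 3$ generation argument genuinely covers \emph{all} basis vectors $e_q$ of $H_1^\Gamma(E)$, including those indexed by the fixed edges $q\in Q\subset\Gamma(E)$, which do not move when $E$ varies; the shift-functor reformulation sidesteps this cleanly by exhibiting $H_i^\Gamma$ as a subquotient of a shift of the $\Gamma=\emptyset$ module, so I expect to write the proof in that form and simply say "the proof is identical to that of Propositions \ref{H-fg} and \ref{cp}, using Lemma \ref{shift} to absorb the vertices of $\Gamma$ and the fact that smallness and the vanishing of $r_{2i}$ pass to subquotients."
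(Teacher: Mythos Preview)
Your proposal is correct and matches the paper, whose entire proof is the sentence ``The proof of the following proposition is identical to the proof of Proposition~\ref{H-fg}.'' Your shift-lemma repackaging (exhibiting $H_i^\Gamma$ as a quotient of the shift $(H_i)_V$) is also valid and is in fact exactly how the paper handles the next result, Proposition~\ref{cp-Gamma}. One small slip in your direct argument: you describe ``collapsing the vertices of $\Gamma$ together with all of $E$ except $e,e'$ onto the vertex $3$,'' but the $\FSop$-structure on $H_1^\Gamma$ only moves $E$ and fixes $V$; the correct surjection $E\twoheadrightarrow[3]$ (identity on $V$, sending $E\smallsetminus\{e,e'\}$ to $3$) already satisfies $f^*e_{\{1,2\}}=e_{\{e,e'\}}$, so the argument goes through unchanged once you write the map correctly.
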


Given a graph $\Gamma$ with vertex set $V$ and a subset $S\subset V$, let $\Gamma_S$ be the induced subgraph with vertex set $S$.
Given a surjective map $f:V\to V'$,
let $\Gamma^f$ be the graph with vertex set $V'$ whose edges are the images of edges of $\Gamma$ (ignoring loops and multiple edges).
Fix a graph $\Delta$ with vertex set $[p+1]$, and define
$$\CG(E) \;\; := \bigoplus_{\substack{f:V\sqcup E\twoheadrightarrow [p+1]\\ \Gamma(E)^f = \Delta\\ \text{$\Gamma(E)_{f^{-1}(j)}$ connected $\forall j$}
}}
H_i\Big(\Conf\!\left(\Gamma(E)_{f^{-1}(1)}\right)\times\cdots\times\Conf\!\left(\Gamma(E)_{f^{-1}(p+1)}\right); \C\Big).$$

Given surjective maps $g:E\to F$ and $f:V\sqcup F\to [p+1]$ such that $\Gamma(E)_{f^{-1}(j)}$ is connected for all $j$, we can compose $f$ with $g$ to
obtain a surjective map $g^*f:V\sqcup E\to [p+1]$
with the property that $\Gamma(E)_{(g^*f)^{-1}(i)}$ is connected for all $j$ 
and $\Gamma(E)^{g^*f} = \Gamma(F)^f$.  This observation allows us to define an $\FSop$-module
structure on $\CG$.  Taking $\Gamma$ to be the empty graph and $\Delta$ the complete graph,
we have $\Comp_{p,i}^{\Gamma,\Delta} = \Comp_{p,i}$.
The following proposition generalizes Proposition \ref{cp}.

\begin{proposition}\label{cp-Gamma}
The $\FSop$-module $\Comp^{\Gamma,\Delta}_{p,0}$ is $(p+1)$-small, and $\CG$ is $(p+2i)$-small for all $i\geq 1$.
\end{proposition}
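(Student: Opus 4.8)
The plan is to mimic the proof of Proposition \ref{cp} as closely as possible, substituting Proposition \ref{H-fg-Gamma} for Proposition \ref{H-fg} and Lemma \ref{general} for the bookkeeping of the direct sum over surjections. First I would expand $\CG(E)$ using the Künneth formula, exactly as in the definition of $\Comp_{p,i}(E)$: a class in $H_i$ of a product of configuration spaces of the induced subgraphs $\Gamma(E)_{f^{-1}(1)}, \dots, \Gamma(E)_{f^{-1}(p+1)}$ decomposes over tuples $(i_1, \dots, i_{p+1})$ with $i_1 + \cdots + i_{p+1} = i$ into a tensor product $H_{i_1}(\Conf(\Gamma(E)_{f^{-1}(1)})) \otimes \cdots \otimes H_{i_{p+1}}(\Conf(\Gamma(E)_{f^{-1}(p+1)}))$. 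Thus $\CG$ is a direct sum, over tuples $(i_1, \dots, i_{p+1})$, of $\FSop$-modules, and since smallness is preserved under direct sums it suffices to bound each summand.

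Next I would recognize each such summand as an instance of the construction in Lemma \ref{general}. Here the subtlety relative to Proposition \ref{cp} is that the blocks $f^{-1}(j)$ now contain the fixed vertex set $V$ of $\Gamma$ as well as elements of $E$, and the relevant $\FSop$-modules are not literally $H_{i_j}$ but rather the ``shifted'' modules $\big(H_{i_j}^{\Gamma_{?}}\big)_{V \cap (\text{block})}$ attached to the partition of $V$ determined by the combinatorial data $\Delta$ and the connectivity constraints. The point is that, after fixing how $V$ is distributed among the $p+1$ blocks (finitely many choices, which we can sum over, again preserving smallness), each block's contribution is a shift — in the sense of Lemma \ref{shift} — of one of the modules $H_{i_j}^{\Gamma'}$ for an appropriate induced subgraph $\Gamma'$, applied to the part of $E$ landing in that block. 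By Lemma \ref{shift} and Proposition \ref{H-fg-Gamma}, the shifted module $\big(H_{i_j}^{\Gamma'}\big)_S$ is $1$-small if $i_j = 0$ and $2i_j$-small if $i_j \geq 1$. Lemma \ref{general} then tells us the tensor product over the $p+1$ blocks is $d$-small, where $d$ is the sum of these local bounds.

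Finally I would compute $d$. If $d_0$ denotes the number of indices $j$ with $i_j = 0$, then $d = d_0 \cdot 1 + \sum_{i_j \geq 1} 2 i_j = d_0 + 2i$. When $i = 0$ every $i_j = 0$, so $d_0 = p+1$ and $d = p+1$; when $i \geq 1$ at least one $i_j$ is positive, so $d_0 \leq p$ and hence $d \leq p + 2i$. Summing over the finitely many tuples $(i_1,\dots,i_{p+1})$ and the finitely many ways of distributing $V$ among the blocks, we conclude that $\Comp^{\Gamma,\Delta}_{p,0}$ is $(p+1)$-small and $\CG$ is $(p+2i)$-small for $i \geq 1$, as claimed.

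The main obstacle I anticipate is purely bookkeeping: one must verify carefully that the $\FSop$-module structure on each Künneth summand of $\CG$ — which only moves the $E$-variables, leaving the $V$-variables fixed — really does match the structure of a tensor product (as in Lemma \ref{general}) of shift functors (as in Lemma \ref{shift}) applied to the $H_{i_j}^{\Gamma'}$'s. This is the step where the constraint $\Gamma(E)^f = \Delta$ and the connectivity conditions need to be unwound, but none of it is deep; once the identification is made, the smallness bound is immediate from the cited lemmas and propositions.
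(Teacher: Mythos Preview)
Your approach can be made to work, but the paper takes a shorter and rather different route. Instead of redoing the K\"unneth decomposition and invoking Proposition~\ref{H-fg-Gamma} block by block, the paper observes (from the meromorphic-differential-form presentation) that for any graph $\Gamma$ on vertex set $V$, the ring $H^*(\Conf(\Gamma);\C)$ is a \emph{subring} of $H^*(\Conf(V);\C)$; dually, each $f$-summand of $\CG(E)$ is a quotient of the corresponding $f$-summand of $\Comp_{p,i}(V\sqcup E)$. Summing over $\Delta$, the module $\Comp_{p,i}^\Gamma:=\bigoplus_\Delta\CG$ is thus a subquotient of the shift $(\Comp_{p,i})_V$, and Proposition~\ref{cp} together with a single application of Lemma~\ref{shift} finishes the proof. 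This bypasses all tracking of how $V$ distributes among blocks and of the connectivity and $\Delta$ constraints.

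Two remarks on your version. First, once $f|_V$ is fixed, the residual sum is over maps $E\to[p+1]$ making the combined map surjective, not over surjections $E\twoheadrightarrow[p+1]$; so Lemma~\ref{general} does not apply verbatim, and you need a further decomposition over the image of $E$ in $[p+1]$ (treating blocks with $E_j=\emptyset$ as fixed tensor factors). This is routine but is a genuine extra step beyond what you wrote. Second, the $j$-th factor is already literally $H_{i_j}^{\Gamma_{V_j}}(E_j)$, so no shift in the sense of Lemma~\ref{shift} is needed at the block level---Proposition~\ref{H-fg-Gamma} applies directly. The shift enters only in the paper's argument, applied once globally to $\Comp_{p,i}$.
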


\begin{proof}
Let $\Comp_{p,i}^{\Gamma} := \bigoplus_{\Delta} \CG$.  We will prove that $\Comp_{p,i}^{\Gamma}$ is $(p+1)$-small
when $i=0$ and $(p+2i)$-small when $i\geq 1$,
and therefore so is each of its summands.  The above description of the cohomology ring of $\Conf(\Gamma)$
in terms of meromorphic differential forms makes it clear that $H^*(\Conf(\Gamma); \C)$ is a subring
of $H^*(\Conf(V); \C)$, and therefore that the $f$-summand of $\CG(E)$ is a quotient of 
the $f$-summand of $\Comp_{p,i}(V\sqcup E)$.
The proposition then follows from Proposition \ref{cp} and Lemma \ref{shift}.
\end{proof}

We next generalize the material in Section \ref{sec:kl}.
For any finite set $E$ and any non-negative integers $p,q$, define 
$$A_{\Gamma,i}^{p,q}(E)\; := 
\;\; \bigoplus_\Delta\; \Comp^{\Gamma,\Delta}_{p,2i-p-q}(E) \;\otimes\; D_{i-q}^{\Delta}\big(\emptyset\big)^*.$$
As in the case where $\Gamma$ is the empty graph, $A_{\Gamma,i}^{p,q}$ is an $\FSop$-module with an action of $S_{p+1}$,
and we define  the invariant $\FSop$-module $B_{\Gamma,i}^{p,q} := (A_i^{p,q})^{S_{p+1}}$ 
along with its dual FS-module $(B_{\Gamma,i}^{p,q})^*$.
There is again a first quadrant cohomological spectral sequence with $E_1$ page $B_{\Gamma,i}^{p,q}(E)^*$
that converges to $D^\Gamma_i(E)$, inducing an FS-module structure on $D^\Gamma_i$.  

\begin{theorem}
Let $\Gamma$ be a graph with vertex set $V$.
For all $i\geq 1$, the $\FSop$-module $(D^\Gamma_i)^*$ admits a filtration whose associated graded is $2i$-small, and we have
$$r_{2i}\big((D^\Gamma_i)^*\big) = \frac{(2i)^{|V|} \dim D_{i-1}(2i)}{(2i)!} = (2i)^{|V|} r_{2i}(D_i^*).$$
\end{theorem}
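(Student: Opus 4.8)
The plan is to mimic the proof of Theorem \ref{D-fg} step by step, substituting the relative analogues of each ingredient. First I would establish $2i$-smallness of $(D^\Gamma_i)^*$. Since smallness passes to subquotients, it suffices to bound each $B_{\Gamma,i}^{p,q}$, hence each $A_{\Gamma,i}^{p,q}$, and hence (by Lemma \ref{general}, Proposition \ref{cp-Gamma}, and stability under tensoring with a fixed vector space) each summand $\Comp^{\Gamma,\Delta}_{p,2i-p-q}(E)\otimes D^\Delta_{i-q}(\emptyset)^*$. Proposition \ref{cp-Gamma} gives $(p+1)$-smallness when $p+q=2i$ and $(p+2(2i-p-q))$-smallness otherwise. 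Then I would invoke exactly the same vanishing properties that were used before: $D^\Delta_{i-q}(\emptyset) = 0$ unless $2(i-q)<|[p+1]|-1 = p$, or $p=0$ and $q=i$ (the Kazhdan-Lusztig degree bound applied to the matroid of $\Delta$, a graph on $p+1$ vertices, noting $D^\Delta_j(\emptyset)$ is the $j$th equivariant coefficient of that matroid). This is the key point where the argument is genuinely identical to the absolute case: the vertex set $V$ of $\Gamma$ never enters the bound on $p$ or $q$, only the number $p+1$ of blocks, so the inequality $2(i-q)-p+2i<2i$ (or the exceptional case $(p,q)=(0,i)$) forces $A_{\Gamma,i}^{p,q}=0$ unless its smallness bound is already $\leq 2i$. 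Thus $(D^\Gamma_i)^*$ is $2i$-small.

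Next I would compute $r_{2i}$. As in Theorem \ref{D-fg}, the refined analysis shows $A_{\Gamma,i}^{p,q}$ (hence $B_{\Gamma,i}^{p,q}$) is $(2i-1)$-small unless $(p,q)=(0,i)$ or $(2i-1,1)$. For $(p,q)=(0,i)$ the module is $B_{\Gamma,i}^{0,i}\cong H_i^\Gamma$, and Proposition \ref{H-fg-Gamma} gives $r_{2i}(H_i^\Gamma)=0$. So by Lemma \ref{ri} applied along the relevant row of the spectral sequence, $r_{2i}\big((D^\Gamma_i)^*\big) = r_{2i}\big(B_{\Gamma,i}^{2i-1,1}\big)$. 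Here $B_{\Gamma,i}^{2i-1,1} \cong \big(\bigoplus_\Delta \Comp^{\Gamma,\Delta}_{2i-1,0}\big)^{S_{2i}}\otimes D_{i-1}(2i)^*$; summing over $\Delta$ and taking $S_{2i}$-invariants, $\big(\bigoplus_\Delta\Comp^{\Gamma,\Delta}_{2i-1,0}\big)^{S_{2i}}(E)$ has a basis indexed by partitions of $V\sqcup E$ into $2i$ nonempty blocks each of which induces a connected subgraph of $\Gamma(E)$ — but every block containing at least one element of $E$ is automatically connected in $\Gamma(E)$, and for generic large $E$ the count is dominated by such partitions, so $\dim\big(\bigoplus_\Delta\Comp^{\Gamma,\Delta}_{2i-1,0}\big)^{S_{2i}}(n)$ is asymptotic to $(2i)^{|V|}$ times the Stirling number $S(n,2i)$ (choose the block of each of the $|V|$ fixed vertices among the $2i$ blocks, then surject the $n$ new points onto $2i$ blocks).

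Combining, $r_{2i}\big((D^\Gamma_i)^*\big) = \lim_{n\to\infty} (2i)^{-n}(2i)^{|V|}S(n,2i)\dim D_{i-1}(2i) = (2i)^{|V|}\dim D_{i-1}(2i)/(2i)! = (2i)^{|V|} r_{2i}(D_i^*)$, using $\lim_{n\to\infty}S(n,2i)/(2i)^n = 1/(2i)!$ and Theorem \ref{D-fg}. The main obstacle I anticipate is the combinatorial bookkeeping for the dimension of $\big(\bigoplus_\Delta\Comp^{\Gamma,\Delta}_{2i-1,0}\big)^{S_{2i}}(n)$: one must check that partitions of $V\sqcup[n]$ into $2i$ connected blocks, counted up to relabeling blocks, contribute a leading term of exactly $(2i)^{|V|}S(n,2i)$ and that the error from requiring connectivity of blocks lying entirely inside $V$ (a bounded correction, independent of $n$ times a polynomially-smaller Stirling number) is negligible in the $(2i)^{-n}$ limit. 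Everything else is a direct transcription of the absolute case.
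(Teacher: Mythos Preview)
Your approach matches the paper's exactly, but there is one slip in the computation of $r_{2i}$ that you should correct. You write
\[
B_{\Gamma,i}^{2i-1,1} \cong \Big(\bigoplus_\Delta \Comp^{\Gamma,\Delta}_{2i-1,0}\Big)^{S_{2i}}\otimes D_{i-1}(2i)^*,
\]
but this is not right: by definition $A_{\Gamma,i}^{2i-1,1}(E)=\bigoplus_\Delta \Comp^{\Gamma,\Delta}_{2i-1,0}(E)\otimes D_{i-1}^{\Delta}(\emptyset)^*$, and the tensor factor $D_{i-1}^{\Delta}(\emptyset)^*$ genuinely depends on the quotient graph $\Delta$, so it cannot be pulled outside the sum. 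The paper instead writes
\[
B_{\Gamma,i}^{2i-1,1}(E)\;=\;\Big(\bigoplus_{f:V\sqcup E\twoheadrightarrow[2i]} D_{i-1}^{\Gamma(E)^f}(\emptyset)^*\Big)^{S_{2i}}
\]
and then argues that for large $E$ the number of surjections $f$ with every block meeting $E$ (hence $\Gamma(E)^f=K_{2i}$ and each block connected) is asymptotic to $(2i)^{|V|+n}$, while the remaining $f$'s are negligible. Your asymptotic analysis implicitly does exactly this---the dominant partitions have every block meeting $E$, which forces $\Delta=K_{2i}$ and hence the relevant factor is $D_{i-1}(2i)$---so the conclusion is correct. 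You just need to remove the incorrect tensor identification and instead state explicitly that the contribution from $\Delta\neq K_{2i}$ is $o\big((2i)^n\big)$, since such terms require at least one block contained in $V$ and hence the number of contributing $f$'s is bounded by a constant times $(2i-1)^n$.
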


\begin{proof}
The same argument that we used in the proof of Theorem \ref{D-fg} shows that $(D^\Gamma_i)^*$ admits a filtration whose associated graded is $2i$-small
and $r_{2i}\big((D^\Gamma_i)^*\big) = r_{2i}(B_{\Gamma,i}^{2i-1,1})$.
Explicitly, we have
$$B_{\Gamma,i}^{2i-1,1}(E)\; =\left(\bigoplus_{f:V\sqcup E\twoheadrightarrow [2i]} D_{i-1}^{\Gamma(E)^f}\!(\emptyset)^*\right)^{S_{2i}}.$$
When $E$ is large, $\Gamma(E)_{f^{-1}(j)}$ is connected for all $j$ and $\Gamma(E)^f$ is equal to $K_{2i}$ for almost all maps 
$f:V\sqcup E\twoheadrightarrow [2i]$,
and the number of such maps is asymptotic to $(2i)^{|V|+n}$.
We therefore have
$$r_{2i}(B_{\Gamma,i}^{2i-1,1}) = \lim_{n\to\infty} \frac{(2i)^{|V|+n} \dim D_{i-1}(2i)}{(2i)^n (2i)!} = \frac{(2i)^{|V|} \dim D_{i-1}(2i)}{(2i)!},$$
and the theorem is proved.
\end{proof}

\bibliography{./symplectic}
\bibliographystyle{amsalpha}

\end{document}